\newcommand{\cI}{\mathcal I}
\newcommand{\cK}{\mathcal K}
\newcommand{\cQ}{\mathcal Q}
\newcommand\bbZ{\mathbb Z}
\newcommand\tr{\operatorname{tr}}
\newcommand\CC{\mathbb C}
\theoremstyle{plain}
\newtheorem{theorem}{Theorem}[section]
\newtheorem{conjecture}[theorem]{Conjecture}
\newtheorem{proposition}[theorem]{Proposition}
\theoremstyle{definition}
\newtheorem{example}[theorem]{Example}
\newtheorem{remark}[theorem]{Remark}
\newtheorem{definition}[theorem]{Definition}
\title{The ring structure of twisted equivariant $KK$-theory for noncompact Lie groups}
\author{Chi-Kwong Fok}
{\date{May 19, 2021}}
\address[Chi-Kwong Fok]{
Department of Pure Mathematics,
School of  Mathematical Scienes, 
University of Adelaide, 
Adelaide, SA 5005, 
Australia. Current address: New York University Shanghai, Pudong New Area, Shanghai 200122, China
}
\email{ckfok@nyu.edu}
\author{Varghese Mathai}
\address[Varghese Mathai]{
Department of Pure Mathematics,
School of  Mathematical Scienes, 
University of Adelaide, 
Adelaide, SA 5005, 
Australia}
\email{mathai.varghese@adelaide.edu.au}
\begin{document}
\begin{abstract}
  {Let $G$ be a connected semi-simple Lie group with torsion-free fundamental group.} We show that the twisted equivariant $KK$-theory $KK_{\bullet}^{G}(G/K, \tau_G^G)$ of $G$ has a ring structure
induced from the renowned ring structure of the twisted equivariant $K$-theory $K^{\bullet}_{K}(K, \tau_K^K)$ of a maximal compact subgroup $K$. We give a geometric description of representatives in $KK_{\bullet}^{G}(G/K, \tau_G^G)$ in terms of equivalence classes of certain equivariant correspondences and  {obtain
an optimal set of generators of this ring}. We also
 establish various properties of this ring under some additional hypotheses on $G$ and give an application to the quantization of $q$-Hamiltonian $G$-spaces in an appendix. 
 We also suggest conjectures regarding the relation to positive energy representations of $LG$ that are induced from certain unitary representations of $G$ in the noncompact case.
\end{abstract}

\maketitle

\tableofcontents

\section*{Introduction}
 {Let $K$ be a connected  {compact Lie group with torsion-free fundamental group}, viewed as a $K$-space via the conjugation action. The equivariant cohomology group $H_K^3(K, \mathbb{Z})$ classifies the local coefficient systems called \emph{twists} for the equivariant $K$-theory (or $K$-homology) of $K$. Let $\tau_K^K$ be a twist satisfying a certain positivity condition}. A result of Freed-Hopkins-Teleman \cite{FHT,FHT1} gives the equivariant twisted $K$-homology $K^K_\bullet(K, \tau_K^K)$ of $K$ a ring structure, using the multiplication map of the group $K$. This ring turns out to have a trace, making it a Frobenius ring that has an associated 2 dimensional TQFT \cite{FHT10}, together with numerous other interesting applications to mathematical physics as listed in the introduction of \cite{FHT3}. 

Our main goal in this paper is to generalise this to the noncompact case as follows.  {Let $G$ be a connected semi-simple Lie group with torsion-free fundamental group, and $K$ a maximal compact subgroup}. The equivariant cohomology groups $H_G^\bullet(G, \mathbb{Z})$ are defined to be the cohomology groups of the Borel construction $EG\times_G G$, which is homotopy equivalent to $EK\times_K K$ which is the Borel construction of $K$. In particular,  
$H_G^3(G, \mathbb{Z})$ ($\cong H_K^3(K, \mathbb{Z})$) classifies the local coefficient systems called \emph{equivariant twists} for the equivariant $K$-theory (or $K$-homology) of $G$. Let $\tau_G^G$ be an equivariant twist whose cohomology class is in 
$H_G^3(G, \mathbb{Z})$. Our first main result  {Theorem \ref{mainthm}} (\ref{ringstructure}) says that the equivariant $KK$-theory $KK^G_\bullet(G/K, (G, \tau_G^G))$ has a natural  ring structure that is induced from the ring structure of $K^K_\bullet(K, \tau_K^K)$, which we recall is obtained using the multiplication map on $K$. The key difficulty in the noncompact case is that the multiplication map on $G$ is {\em not} a proper map, so it does {\em not} induce a map on (equivariant) twisted $K$-homology or on (equivariant) twisted $K$-theory.  {Note that, if $G$ itself is compact, then $G=K$ and Theorem \ref{mainthm} (\ref{ringstructure}) reduces to the ring $K^K_\bullet(K, \tau_K^K)$ in the Freed-Hopkins-Teleman Theorem.}

In \S \ref{sect:correspondences}, we give a sketch of a geometric description of the elements of  {equivariant $KK$-theory} in terms of certain equivariant correspondences, generalising
to the twisted case {by} earlier work of others \cite{ConnesSkandalis84,EM}. 
In the case when  {$G$ is connected, simply-connected and semi-simple}, we show in \S \ref{sect:generators} that 
there is a natural set of generating correspondences in $KK^G_\bullet(G/K, (G, \tau_G^G))$, that are derived from the generating set in the case of the compact subgroup $K$ via the isomorphisms  
as in \S \ref{sect:generators}. This is the second key theorem in our paper.

Under the same assumptions as above, there is a commutative diagram, 
\begin{eqnarray}\label{commutativediagram}
			\xymatrix{R(K)\ar[r]^{{\iota_!^K}}\ar[d]^{\text{D-Ind}}& K_\bullet^K(K, \tau_K^K)\ar[d]^{\phi}\\ K_\bullet(C^*_rG)\ar[r]_{{\iota_!^G}\qquad}& KK^G_\bullet(G/K, (G, \tau_G^G))}.
		\end{eqnarray}
		 {Here $\phi$  consists of a composition of natural isomorphisms as in the proof of Theorem \ref{mainthm} (\ref{ringstructure}) as follows.}
\begin{enumerate}
\item The first is {\em equivariant Poincar\'e duality} isomorphism, $K^K_\bullet(K, \tau_K^K) \cong K_K^{{\bullet+\text{dim }K}}(K, \tau_K^K)$.

\item The {\em Green-Julg} isomorphism, $K_K^\bullet(K, \tau_K^K) \cong K_\bullet(\Gamma_0(\tau_K^K)\rtimes K)$.		
\item  {The restriction map $KK^G_\bullet(G/K, (G, \tau_K^G))\to KK_\bullet^K(G/K, (G, \tau_G^G))$ (cf. \cite[Theorem 5.8]{Ka}).}
\end{enumerate}	
		We define ${\iota_!^G} = \phi \circ {\iota_!^K} \circ (\text{D-Ind})^{-1}$ (there is an
		alternate direct definition of ${\iota_!^G}$, but it is not needed here).
Since Dirac induction $\text{D-Ind}$ is an isomorphism (\cite{CEN}) and $R(K)$ is a ring, it follows that $K_*(C^*_rG)$ is also a ring.
Also $\phi$ is an isomorphism by our main result, and ${\iota_!^K}$ is surjective under the hypotheses on $G$ (hence on $K$)
\cite{FHT}, so we deduce that  ${\iota_!^G}$ is 
also surjective. 

It follows that if $\cI_K$ denotes the Verlinde ideal for $K$, then $$K_\bullet^K(K, \tau_K^K) \cong R(K)/\cI_K,$$ then it is natural to define {$\cI_G =  \text{D-Ind} (\cI_K)$} as the Verlinde ideal for 
$G$, in which case $$KK^G_\bullet(G/K, (G, \tau_G^G)) \cong K_\bullet(C^*_rG)/\cI_G,$$
showing that $KK^G_\bullet(G/K, (G, \tau_G^G))$ is a {\em Verlinde ring}.
Now $ K_0(C^*_rG)$  is a ring by Dirac induction, so this gives an alternate description of the ring structure of $KK^G_\bullet(G/K, (G, \tau_G^G)) $.
We note that 
there is an induced trace 
on $KK^G_0(G/K, (G, \tau_G^G))$, thus showing that it is a finite dimensional {\em Frobenius ring}.

 {Other main results of ours include}
\begin{enumerate}

\item\label{CK} the {\em Connes-Kasparov} isomorphism in this context, $$K_\bullet(\Gamma_0(\tau_K^K)\rtimes K) \cong 
K_\bullet(\Gamma_0(\tau_G^G)\rtimes G),$$ and
\item\label{BC}  the  {\em Baum-Connes} conjecture in this context, $$KK^G_\bullet(G/K, (G, \tau_G^G)) \cong K_\bullet(\Gamma_0(\tau_G^G)\rtimes G).$$
\end{enumerate}

The sketch of the proofs of (\ref{CK}) and (\ref{BC}) are as follows. By \S7 in \cite{CEN}, one has $K_\bullet(\cK\rtimes K) \cong 
K_\bullet(\cK\rtimes G)$ when $\dim(G/K)$ is even, which has to be slightly modified when $\dim(G/K)$ is odd. Next in \S \ref{noncompact}
we use the
Steinberg map on $G$ to define an interesting {closed} cover of $G$ that is invariant under the adjoint action of $G$ and so are their intersections. 
Moreover {there are $G$-equivariant proper deformation retracts of both the closed sets and their intersections onto either a stratification of conjugacy classes of $G$ or a fiber bundle over a closed cone. The Baum-Connes and Connes-Kasparov conjectures are then shown to hold for these closed sets.} Then we can use the Mayer-Vietoris sequence 
(in the noncompact case) to prove  {(\ref{CK}). (\ref{BC})} is proved analogously.


When $G$ is semisimple, it is well known that $ K_0(C^*_rG)$ is generated by discrete series representations of $G$ and limits of discrete series representations of $G$ in the equirank case cf. \cite{BCH}. Inspired by the compact case
(i.e. the Freed-Hopkins-Teleman theorem \cite{FHT}), 
it suggests a conjecture that there are isomorphisms between 
$KK^G_\bullet(G/K, (G, \tau_G^G))$,  {$K_\bullet(\Gamma_0(\tau_G^G)\rtimes G)$} and certain positive energy representations of $LG$ that are induced from discrete series representations of $G$ and their limits, in the equirank case. In the case when $G$ is a complex semisimple Lie group, then there are no discrete series representations of $G$, but the $ K_\bullet(C^*_rG)$ is generated instead by the unitary principal-series representations of $G$ \cite{PP}, so it suggests a conjecture that there are isomorphisms between 
$KK^G_\bullet(G/K, (G, \tau_G^G))$,  {$K_\bullet(\Gamma_0(\tau_G^G)\rtimes G)$} and certain positive energy representations of $LG$ induced from the unitary principal series representations of $G$.  {In Appendix {{\ref{kcstar}}}, we review the $K$-theory of the reduced $C^*$-algebra of $G$ and Connes-Kasparov conjecture in the real reductive linear and complex semi-simple cases (\cite{W, CCH, PP}). Based on these results we describe $K_\bullet(\Gamma_0(\tau_G^G)\rtimes G)$ for $G$ complex semi-simple in terms of weights, which we hope will prove useful in the computational aspect of relevant future research in mathematical physics, and propose our conjecture on the equivariant twisted $K$-theory of $G$ and certain positive energy representations of $LG$.} 

In Appendix B, we study $q$-Hamiltonian induction and cross-section in the noncompact case: this was already established in the compact case in \cite{AMM}. In particular, we show that the set of generators of $KK^G_0(G/K, (G, \tau_G^G))$ obtained in section 3, are all $q$-Hamiltonian $G$-spaces, where $G$ acts properly and cocompactly, and we define the quantization of such $q$-Hamiltonian $G$-spaces, generalizing the definition of Meinrenken \cite{M2} in the compact case.

\bigskip

\noindent\textbf{Acknowledgements}. The authors thank Jonathan Rosenberg, James Humphreys and a member of MathOverflow (username: Dap) for their advice on finding a {suitable} cover as in Proposition \ref{opencover}, {and the anonymous referee for critical comments and in particular pointing out a mistake in an earlier draft of the paper}.   VM thanks the participants of the conference,{\em Quantization in symplectic geometry}, Cologne, July 15 - 19, 2019, for feedback on his talk over there.
Both authors acknowledge support from the Australian Research Council 
through the Discovery Project grant DP150100008.
Varghese Mathai thanks the Australian Research Council  for support via the Australian Laureate Fellowship FL170100020.

\section{Review of the compact Lie group case}
 {Let $K$ be a connected and simply-connected compact Lie group, viewed as a $K$-space via the conjugation action.} By the structure theorem for compact Lie groups, $K$ is a product of simple, simply-connected compact Lie groups $K_i$, $1\leq i\leq n$. The equivariant cohomology group $H_K^3(K, \mathbb{Z})$ classifies the local coefficient systems called \emph{twists} for the equivariant $K$-theory (or $K$-homology) of $K$, as well as the central extensions of the loop group $LK$. It is known that $H_K^3(K, \mathbb{Z})\cong \bigoplus_{i=1}^nH_{K_i}^3(K_i, \mathbb{Z})$, where $H_{K_i}^3(K_i, \mathbb{Z})\cong\mathbb{Z}$. Let $\tau_K^K$ be a twist whose cohomology class is $\vec{k}:=(k_1, k_2, \cdots, k_n)\in H_K^3(K, \mathbb{Z})$. The adjoint map $\text{Ad}: K\to SO(\mathfrak{k})$ induces the map $\text{Ad}^*: H_{SO(\mathfrak{k})}^3(SO(\mathfrak{k}), \bbZ)\to H_K^3(K, \mathbb{Z})$, and we call the image of $(1, 1)\in H_{SO(\mathfrak{k})}^3(SO(\mathfrak{k}), \mathbb{Z})\cong\mathbb{Z}\oplus\mathbb{Z}_2$ or the twist over $K$ representing  {it} the \emph{adjoint shift}, denoted by $\textsf{h}^\vee$. A theorem of Freed-Hokins-Teleman \cite{FHT}
gives a ring structure to the equivariant twisted $K$-theory of $K$, $K_K^*(K, \tau_K^K)$.
\begin{theorem}[Freed-Hopkins-Teleman]
\begin{enumerate}
	\item We have {that}
	\[K_K^\bullet(K, \tau_K^K)\] is a ring, with Pontryagin product induced by the multiplication map on the compact group $K$ and equivariant Poincar\'e duality in equivariant twisted K-theory.
	\item The map
	\[{\iota_!}: R(K)\cong K_K^\bullet(\text{pt})\to K_K^\bullet(K, \tau_K^K)\]
	given by the wrong-way map induced by the inclusion of the group identity into $K$ is onto, and its kernel is precisely the level $\vec{k}-\textsf{h}^\vee$ Verlinde ideal $\cI_K$
	of $R(K)$. 
\end{enumerate}
\end{theorem}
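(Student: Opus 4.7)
For part (1), the plan is to construct a Pontryagin product on the equivariant twisted $K$-homology $K_\bullet^K(K, \tau_K^K)$ and then transport it to twisted $K$-theory via equivariant Poincar\'e duality. The Pontryagin product arises as the composition
\[ K_\bullet^K(K, \tau_K^K) \otimes K_\bullet^K(K, \tau_K^K) \xrightarrow{\boxtimes} K_\bullet^K(K\times K, \pi_1^*\tau_K^K + \pi_2^*\tau_K^K) \xrightarrow{m_*} K_\bullet^K(K, \tau_K^K), \]
where $m: K \times K \to K$ is the multiplication map, which is automatically proper (as $K$ is compact) and $K$-equivariant for the diagonal conjugation action. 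The key cohomological input is the trivialisation $m^*\tau_K^K \cong \pi_1^*\tau_K^K + \pi_2^*\tau_K^K$ of equivariant twists on $K\times K$, a manifestation of the primitivity/multiplicativity of $\tau_K^K$ that amounts to the existence of a compatible system of central extensions of loop groups. Without this trivialisation the pushforward $m_*$ in twisted $K$-homology would not even be defined.

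To transport the product to $K^\bullet_K(K, \tau_K^K)$, I would invoke equivariant Poincar\'e duality. Viewed $K$-equivariantly under conjugation, the tangent bundle $TK$ is trivialised by the Maurer--Cartan form but carries a nontrivial adjoint action, and the equivariant $\text{Spin}^c$-obstruction is precisely the adjoint shift class $\textsf{h}^\vee \in H^3_K(K, \mathbb{Z})$. The twist conventions fixed in the introduction already absorb this shift, yielding $K^K_\bullet(K, \tau_K^K) \cong K_K^{\bullet+\dim K}(K, \tau_K^K)$, and the Pontryagin product transported across this isomorphism provides the ring structure on $K^\bullet_K(K, \tau_K^K)$ claimed in (1).

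For part (2), which is the heart of the Freed--Hopkins--Teleman computation, the plan is as follows. First compute $K^\bullet_K(K, \tau_K^K)$ by equivariant Mayer--Vietoris over a cover of $K$ indexed by the faces of the fundamental alcove: fixed-point localisation reduces each local contribution to a sum indexed by regular elements $\exp(\xi)$ with $\xi$ in the interior of the alcove, and the resulting free abelian group is naturally identified with the Verlinde algebra $R_{\vec{k}-\textsf{h}^\vee}(LK)$ of positive energy representations of $LK$ at the shifted level. Next analyse $\iota_!$ as the wrong-way Thom map associated to the $K$-equivariant inclusion $\{e\}\hookrightarrow K$, whose normal bundle is $\mathfrak{k}$ with the adjoint action (accounting for the $\textsf{h}^\vee$ shift): on the irreducible $V_\lambda$ it produces, up to $\rho$-shifts, the level-$(\vec{k}-\textsf{h}^\vee)$ Kac character. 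Matching this with the algebraic quotient $R(K)\twoheadrightarrow R(K)/\cI_K$ then yields simultaneously the surjectivity of $\iota_!$ and the identification of its kernel with the Verlinde ideal. The main obstacle is exactly this last matching step: the twist conventions and the interleaving $\rho$- and adjoint-shift bookkeeping must be aligned very carefully, but once they are, surjectivity follows from the parametrisation of positive energy representations by dominant weights in the alcove, and the kernel is pinned down by the vanishing pattern of the Kac character on admissible weights.
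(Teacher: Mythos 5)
This statement is the Freed--Hopkins--Teleman theorem, which the paper quotes from \cite{FHT} without giving a proof, so there is no in-paper argument to compare against; your outline follows the standard route of the cited proof (primitivity of the twist giving the Pontryagin pushforward $m_*$, the adjoint-shift Poincar\'e duality, and the alcove/Mayer--Vietoris computation identifying the answer with the level $\vec{k}-\textsf{h}^\vee$ Verlinde quotient), which is also exactly the machinery the paper itself reviews in its \S 2.4 via Segal's spectral sequence and the complex of $R(\widehat{K}_I)_k$'s. Your sketch is consistent with that approach; the only caveat is that it remains a sketch at the points you yourself flag (the spectral-sequence degeneration and the Kac-character bookkeeping identifying $\ker\iota_!$ with $\cI_K$), which is where the real content of \cite{FHT} and \cite{M1} lies.
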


\begin{remark}
The main Freed-Hopkins-Teleman Theorem identifies the equivariant twisted $K$-theory of $K$ with the Verlinde ring of its loop group $LK$ (i.e. the representation group of positive energy representations of $LK$ with fusion product structure). We will not use this in our paper.
\end{remark}

\section{The noncompact Lie group case}\label{noncompact}

It would be interesting to establish a version of the Freed-Hopkins-Teleman Theorem for noncompact Lie group $G$. Here we would like to achieve the modest goal of explicitly describing the equivariant twisted $K$-{theory} of $G$ by relating it to that of the maximal compact subgroup $K$. 

\subsection{The twists}\label{thetwists}  {In this section, we construct equivariant twists on $G$. For convenience of exposition we shall assume that $G$ is a connected, simply-connected and simple Lie group. One can easily adapt the construction in this section to the case where $G$ is connected semi-simple Lie group with torsion-free fundamental group}.

The Cartan decomposition $\mathfrak{g}=\mathfrak{k}\oplus\mathfrak{p}$ induces the diffeomorphism
\begin{align*}
	m: K\times \mathfrak{p}&\to G\\
	(k, \zeta)&\mapsto k\text{exp}(\zeta),
\end{align*}
where $\text{exp}(\mathfrak{p})$ is contractible. Noting that $[\mathfrak{k}, \mathfrak{p}]\subseteq \mathfrak{p}$, we have that the map
\begin{align*}
	F: G\times I&\to G\\
	(k\text{exp}(\zeta), t)&\mapsto k\text{exp}(t\zeta)
\end{align*}
witnesses a $K$-equivariant deformation retract from $G$ onto $K$. Moreover, $BK\simeq EG/K$ can be realized as a $G/K$-bundle over $BG=EG/G$, and $G/K$ is contractible. Thus $BK$ and $BG$ are homotopy equivalent. It follows that the restriction maps
\[H_G^3(G, \mathbb{Z})\longrightarrow H_K^3(G, \mathbb{Z})\longrightarrow H_K^3(K, \mathbb{Z})\cong\mathbb{Z}\]
are isomorphisms. Let $[\tau_K^G]\in H_K^3(G, \mathbb{Z})$ and $[\tau_G^G]\in H_G^3(G, \mathbb{Z})$ be the preimages of $[\tau_K^K]$ under the above restriction maps. 
Now we would like to construct a DD-bundle over $G$ whose cohomology class is the positive generator of $H_G^3(G, \mathbb{Z})$. If $G$ itself is a complex simple Lie group, let $\mathcal{H}_K$ be the basic positive energy representation of $\widetilde{LK}$ which is the level 1 central extension of $LK$ (see \cite[\S 4]{Se}). Note that the central circle of $\widetilde{LK}$ acts on $\mathcal{H}_K$ with weight 1. This representation can be extended to a holomorphic representation of $\widetilde{LG}$ (\cite[Corollary 4.3 (b)]{Se}). Let $P_eG$ be the based path group and $L_eG$ the based loop group. Then $P_eG\times_{L_eG}\mathcal{K}(\mathcal{H}_K)\to G$ is the desired $G$-equivariant DD-bundle as it restricts to $P_eK\times_{L_eK}\mathcal{K}(\mathcal{H}_K)$, which is a fundamental $K$-equivariant DD-bundle over $K$ (\cite[\S 3, first paragraph]{M1}). If $G$ is a general real simple Lie group, then consider its complexification $G^\mathbb{C}$ and its maximal compact subgroup $Q$. We can reprise the trick above and let $\mathcal{H}_Q$ be the basic positive energy representation of $\widetilde{LQ}$, which can be extended to a holomorphic representation of $\widetilde{LG^\mathbb{C}}$ and further restricted to $\widetilde{LG}$. The $G$-equivariant DD-bundle $P_eG\times_{L_eG}\mathcal{K}(\mathcal{H}_Q)$ is the desired fundamental $G$-equivariant DD-bundle over $G$. 

Let $\tau_K^K:=P_eK\times_{L_eK}\mathcal{K}(\mathcal{H}_K^{\otimes k})$, $\tau_G^G:=P_eG\times_{L_eG}\mathcal{K}(\mathcal{H}_Q^{\otimes k})$ and $\tau_K^G$ the same as $\tau_G^G$ but with restricted $K$-action. {Let 
\[\text{th}: K_K^\bullet(K, \tau_K^K)\to K_K^{\bullet+\text{dim }G-\text{dim }K}(K\times\mathfrak{p}, m^*\tau_K^G)\]
be the Thom isomorphism (note that $K\times\mathfrak{p}$ is $K$-theoretic orientable $K$-equivariant vector bundle over $K$ as $H_K^3(K, \mathbb{Z})$ does not have 2-torsion). Define the map 
\[i^*: K_K^\bullet(G, \tau_K^G)\to K^{\bullet+\text{dim }K-\text{dim }G}_K(K, \tau_K^K)\]
to be the composition $\text{th}^{-1}\circ m^*$, which can be seen to be an isomorphism.} By the Green-Julg Theorem, the $K$-theory $K_K^\bullet(G, \tau_K^G)$ is the $K$-theory of the crossed product $C^*$-algebra $\Gamma_0(\tau_K^G)\rtimes_\alpha K$, where $\Gamma_0(\tau_K^G)$ is the $C^*$-algebra of sections of $\tau_K^G$ vanishing at infinity, and $\alpha$ is the action map of $K$ on $\Gamma_0(\tau_K^G)$. We will omit $\alpha$ from the notation if there is no confusion about the actions involved.

\subsection{The Baum-Connes assembly map}\label{ringstr}

 {The Baum-Connes map in our context is the map
$$
\mu: KK^G_\bullet(G/K, (G, \tau_G^G))\longrightarrow KK_\bullet(\CC, \Gamma_0(G, \tau_G^G)\rtimes G)=K_\bullet(\Gamma_0(G, \tau_G^G)\rtimes G),
$$
which is defined as follows. It is defined as the composition of a couple of maps, the first of which is Kasparov's induction to the crossed product
$$
KK^G_\bullet(G/K, (G, \tau_G^G))\longrightarrow KK_\bullet(C_0(G/K)\rtimes G, \Gamma_0(G, \tau_G^G)\rtimes G).
$$
Before defining the second map, since $G/K$ is a proper $G$-space that is $G$-cocompact, there is a cutoff function $c$
that satisfies 
$$
\int_G c^2(g.x) dg =1, \qquad \forall x\in G/K
$$
Using $c$, define a projection $p$ in $C_0(G/K)\rtimes G$ as follows. $p(x, g) := c(x)c(g.x).$ Then $p$ determines an element $[p] \in K_0(C_0(G/K)\rtimes G)$.
The second map is then defined as the Kasparov intersection product with $[p]$, that is
$$
[p] \otimes_{C_0(G/K)\rtimes G} -: KK_\bullet(C_0(G/K)\rtimes G, \Gamma_0(G, \tau_G^G)\rtimes G) \longrightarrow KK_\bullet(\CC, \Gamma_0(G, \tau_G^G)\rtimes G).
$$
We will show that $\mu$ is an isomorphism, and since $KK_\bullet(\CC, \Gamma_0(G, \tau_G^G)\rtimes G)= K_\bullet(\Gamma_0(G, \tau_G^G)\rtimes G)$ is a ring, with ring structure 
induced from  $K^{\bullet}_{K}(K, \tau_K^K)$,
it follows that  $KK^G_\bullet(G/K, (G, \tau_G^G))$ is also a ring, with ring structure induced from $K^{\bullet}_{K}(K, \tau_K^K)$.}

\subsection{A commutative diagram}
 {In this section, we would like to define the maps in, and show the commutativity of, the following diagram
\begin{eqnarray}\label{commdiag2}
	\xymatrix{K_K^\bullet(\text{pt})\ar[r]^{\text{D-Ind}}\ar[d]_{{\iota_!^K}}&K_\bullet(C_r^*G)\ar[d]^{{\iota_!^G}}\\ K_K^\bullet(K, \tau_K^K)\ar[r]_q& K_\bullet(\Gamma_0(\tau_G^G)\rtimes G)}
\end{eqnarray}
The map D-Ind is the Dirac induction in the Connes-Kasparov conjecture, which in order to facilitate the proof of commutativity will be redefined by means of the Baum-Connes map as follows. 
\[KK^K_\bullet(\text{pt}, \text{pt})\stackrel{[G/K]\otimes\cdot}{\longrightarrow} KK^K_\bullet(G/K, \text{pt})\stackrel{(\text{res}_G^K)^{-1}}{\longrightarrow} KK^G_\bullet(G/K, \text{pt})\stackrel{\text{BC}}{\longrightarrow} K_\bullet(C_r^*G)\]
Here $[G/K]\in KK^K_\bullet(G/K, \text{pt})$ is the fundamental class of $G/K$ (the Dirac element for $G/K$), and by \cite[Theorem 5.8]{Ka} (phrased as \cite[Proposition 3.2]{MN}), the restriction map is an isomorphism. The map $q$ can be defined similarly as follows. 
\begin{align*}\label{commutativediagram}
	KK^K_\bullet(\text{pt}, (K, \tau_K^K))&\stackrel{[G/K]\otimes\cdot}{\longrightarrow} KK^K_\bullet(G/K, (K, \tau_K^K))\\
					&\stackrel{ {\cdot\otimes[i]}}{\longrightarrow} KK^K_\bullet(G/K, (G, \tau_K^G))\\
					&\stackrel{(\text{res}_G^K)^{-1}}{\longrightarrow} KK^G_\bullet(G/K, (G, \tau_G^G))\\
					&\stackrel{\text{BC}}{\longrightarrow} K_\bullet(\Gamma_0(\tau_G^G)\rtimes G).
\end{align*}
 {Here $[i]$ is the element in $KK^K_\bullet((K, \tau_K^K), (G, \tau_K^G))$ induced by the equivariant $K$-orientable map $i: K\to G$ which is the inclusion map, and intersection product with $[i]$ amounts to the Thom isomorphism.} The map ${\iota_!^K}$ is the pushforward induced by the inclusion of the point as the identity element in $K$. One may interpret this map as the Kasparov product with $1_K$, which is the identity element of the ring $K_K^\bullet(K, \tau_K^K)=KK_\bullet^K(\text{pt}, (K, \tau_K^K))$. The map ${\iota_!^G}$ can be defined similarly using Kasparov product as well. Consider the Kasparov product
\[KK^G_\bullet(\text{pt}, G/K)\otimes KK^G_\bullet(G/K, (G, \tau_G^G))\to KK^G_\bullet(\text{pt}, (G, \tau_G^G))\]
and the Kasparov descent map
\[d: KK^G(\text{pt}, (G, \tau_G^G))\to KK_\bullet(C_r^*G, \Gamma_0(\tau_G^G)\rtimes G).\]
Let $1_{G/K}$ be the identity element of $KK^G_\bullet(\text{pt}, G/K)$ identified with the representation ring $R(K)$ (in fact $1_{G/K}$ is the dual Dirac element of $KK^G_\bullet(\text{pt}, G/K)$).  {Let $1_{G/K, G}$ be $[G/K]\otimes_{C(\text{pt})}1_K\otimes_{\Gamma(\tau_K^K)}[i]\in KK_\bullet^G(G/K, (G, \tau_G^G))$, which is identified with the ring $K^\bullet_K(K, \tau_K^K)$ through the map $[G/K]\otimes_{C(\text{pt})}\cdot\otimes_{\Gamma(\tau_K^K)}[i]$. }
Define $1_G:=1_{G/K}\otimes_{C_0(G/K)}1_{G/K, G}$, $1_{G, d}:=d(1_G)$ and the map $ {\iota_!^G}$ to be the Kasparov product with $1_{G, d}$. To show commutativity of the diagram (\ref{commutativediagram}), we break it up into four diagrams and prove each of them is commutative.
\begin{enumerate}
	\item First we have 
	\begin{eqnarray}
		\xymatrix{KK^K_\bullet(\text{pt}, \text{pt})\ar[r]^{[G/K]\otimes\cdot}\ar[d]^{\cdot\otimes 1_K}& KK^K_\bullet(G/K, \text{pt})\ar[d]^{\cdot\otimes 1_K}\\ KK^K_\bullet(\text{pt}, (K, \tau_K^K))\ar[r]^{[G/K]\otimes\cdot}& KK^K_\bullet(G/K, (K, \tau_K^K))}
	\end{eqnarray}
which is commutative because of the associativity of Kasparov product. 
	\item Next we have the diagram 
	\begin{eqnarray}
		\xymatrix{KK^K_\bullet(G/K, \text{pt})\ar[r]^=\ar[d]^{\cdot\otimes 1_K}& KK^K_\bullet(G/K, \text{pt})\ar[d]^{\cdot\otimes 1_{G, K}}\\ KK^K_\bullet(G/K, (K, \tau_K^K))\ar[r]^{ {\cdot\otimes[i]}}& KK^K_\bullet(G/K, (G, \tau_K^G))}
	\end{eqnarray}
	where $1_{G, K}\in KK^K_\bullet(\text{pt}, (G, \tau_K^G))$ is defined to be $ {1_K\otimes_{\Gamma(\tau_K^K)} [i]}$. Note that $1_{G, K}$ is the identity element of $KK^K_\bullet(\text{pt}, (G, \tau_K^G))$ which is identified with the ring $KK^K_\bullet(\text{pt}, (K, \tau_K^K))=K_K^\bullet(K, \tau_K^K)$  {through intersection product with $[i]$}.
	\item The diagram
	\begin{eqnarray}
		\xymatrix{KK^K_\bullet(G/K, \text{pt})\ar[d]^{\cdot\otimes 1_{G, K}}&\ar[l]_{\text{res}_G^K} KK^G_\bullet(G/K, \text{pt})\ar[d]^{\cdot\otimes 1_G}\\ KK^K_\bullet(G/K, (G, \tau_K^G))&\ar[l]_{\text{res}_G^K} KK^G_\bullet(G/K, (G, \tau_G^G))}
	\end{eqnarray}
	is commutative because $\text{res}_G^K(1_G)=1_{G, K}$. In more details, we have 
	\begin{align*}
		\text{res}_G^K(1_G)&=\text{res}_G^K(1_{G/K}\otimes_{C_0(G/K)}1_{G/K, G})\\
		&=\text{res}_G^K(1_{G/K})\otimes_{C_0(G/K)}\text{res}_G^K(1_{G/K, G})	
	\end{align*}
	where $\text{res}_G^K(1_{G/K})$ is the identity element of $KK^K_\bullet(\text{pt}, G/K)$ which is identified with the representation ring $R(K)=K_K^\bullet(\text{pt})$ and $\text{res}_G^K(1_{G/K, G})$ is the identity element of $KK^K_\bullet(G/K, (G, \tau_K^G))$ which is identified with $K_K^\bullet(K, \tau_K^K)$. The following diagram
	\begin{eqnarray}
		\xymatrix{KK^K_\bullet(\text{pt}, G/K)&\otimes KK^K_\bullet(G/K, (G, \tau_K^G))\ar[r]&KK^K_\bullet(\text{pt}, (G, \tau_K^G))\\ K_K^\bullet(\text{pt})\ar[u]_{\cdot\otimes\text{res}_G^K(1_{G/K})}&\otimes K_K^\bullet(K, \tau_K^K)\ar[u]_{ {[G/K]\otimes\cdot\otimes[i]}}\ar[r]& K_K^\bullet(K, \tau_K^K)\ar[u]_{ {\cdot\otimes [i]}}}
	\end{eqnarray}
	is commutative because $\text{res}_G^K(1_{G/K})$ is the dual Dirac element of $KK^K_\bullet(\text{pt}, G/K)$, $[G/K]$ is the Dirac element of $KK^K(G/K, \text{pt})$, and $\text{res}_G^K(1_{G/K})\otimes_{C_0(G/K)} {[G/K]}$ is the identity in $KK^K_\bullet(\text{pt}, \text{pt})$ since the group $K$ acting on the $KK$-theory is amenable. Thus the Kasparov product in the first row of the above diagram can be identified with the Kasparov product in the second row, which amounts to the action of $K_K^*(\text{pt})$ on $K^*_K(K, \tau_K^K)$ through the $K_K^*(\text{pt})$-module structure of the latter. Finally, 
		 {\begin{align*}
			\text{res}_G^K(1_{G/K})\otimes_{C_0(G/K)}\text{res}_G^K(1_{G/K, G})&=\text{res}_G^K(1_{G/K})\otimes_{C_0(G/K)}[G/K]\otimes_{C(\text{pt})}1_K\otimes_{\Gamma(\tau_K^K)}[i]\\
																	&=1_K\otimes_{\Gamma(\tau_K^K)}[i]\\
																	&=1_{G, K}
		\end{align*}}
		as claimed. 
	\item The last diagram we need to prove to be commutative is 
	\begin{eqnarray}
		\xymatrix{KK^G_\bullet(G/K, \text{pt})\ar[r]^{\text{BC}}\ar[d]^{\cdot\otimes 1_G}&K_\bullet(C_r^*G)\ar[d]^{\cdot\otimes1_{G, d}}\\ KK^G_\bullet(G/K, (G, \tau_G^G))\ar[r]^{\text{BC}}& K_\bullet(\Gamma_0(\tau_G^G)\rtimes G)}
	\end{eqnarray}
	and this can be seen by the definition of the Baum-Connes map given in Section \ref{ringstructure}, and associativity of the Kasparov product, and that $d(1_G)=1_{G, d}$ by definition. 
\end{enumerate}}

\subsection{A spectral sequence argument}
In this section we recall Segal's spectral sequence argument used in \cite{M1} to compute the equivariant twisted $K$-theory of a simply-connected simple compact Lie group $K$. Note that Segal's spectral sequence is best suited to computing the generalized (co)homology of simplicial spaces and a generalization of Mayer-Vietoris sequence applied to a certain open cover specified by the simplicial structure. In \cite{M1} the spectral sequence is applied to equivariant twisted $K$-homology. Here we shall describe this argument by adapting the spectral sequence to equivariant twisted $K$-theory which involves nothing other than reversing the arrows of the differentials. 

Fix a maximal torus $T$ and a positive root system. Let $\Delta\subset\mathfrak{t}$ be the Weyl alcove with respect to the above choices. We label the vertices of $\Delta$ by the index set $\{0, 1, \cdots, n\}$, where $n=\text{rank }K$. Let $I\subseteq \{0, 1, \cdots, n\}$. Let $\Delta_I$ be the subsimplex spanned by the vertices with labels in $I$. Then $K$ admits the following simplicial description
\[K=\coprod_I K/K_I\times \Delta_I/\sim\]
where $K_I$ is the stabilizer subgroup of the elements in $\Delta_I$ and $\sim$ is the equivalence relation
\[(kK_I, \iota(t))\sim(kK_J, t)\]
for $J\subseteq I$ and the inclusion map $\iota: \Delta_J\to \Delta_I$. One can construct $\tau_K^K$ by the simplicial approach which is the `slow paced' version of the path space construction (\cite[\S 3]{M1}). Briefly, the $U(1)$-central extension $\widehat{K}_I$ of $K_I$ can be constructed using the basic inner product on $\mathfrak{k}$. By \cite[Lemma 3.6]{M1}, there exists a Hilbert space $\mathcal{H}$ (which in fact is the basic representation of $LK$) which is unitary representations of $\widehat{K}_I$ such that the central circle acts with weight $-1$ and the representation of $\widehat{K}_J$ restricts to that of $\widehat{K}_I$ for $J\subseteq I$. Let $\mathcal{A}_I$ be $K\times_{K_I}\mathcal{K}(\mathcal{H})$. Now $\tau_K^K$ can be defined as 
\[\tau_K^K=\coprod_I\mathcal{A}_I\times\Delta_I/\sim.\]
With the simplicial descriptions of both $K$ and the twist $\tau_K^K$, we can apply Segal's spectral sequence to $K_K^*(K, (\tau_K^K)^{k+\textsf{h}^\vee})$, where the $E_1^{p, q}$-page is 
\begin{align*}
	E_1^{p, q}&=\bigoplus_{|I|=p+1}K_K^{p+q}(K/K_I\times \Delta_I, K/K_I\times\partial\Delta_I, \mathcal{A}_I^{k+\textsf{h}^\vee}\times\Delta_I)\\
			&\cong\bigoplus_{|I|=p+1}K_K^q(K/K_I, \mathcal{A}_I^{k+\textsf{h}^\vee})\\
			&\cong\begin{cases}\bigoplus_{|I|=p+1} R(\widehat{K}_I)_k&\text{ if }q{\text{ is even}}\\0&{\text{ if }q\text{ is odd}},\end{cases}
\end{align*}
and $R(\widehat{K}_I)_k$ is the  {subgroup} of representations of $\widehat{K}_I$ such that the central circle acts with weight $-k$. The whole $E_1$-page is the following sequence of $R(K)$-modules
\[0{\longrightarrow}\bigoplus_{|I|=1}R(\widehat{K}_I)_k\stackrel{\partial_1}{\longrightarrow}\bigoplus_{|I|=2}R(\widehat{K}_I)_k\stackrel{\partial_2}{\longrightarrow}\cdots\stackrel{\partial_n}{\longrightarrow}\bigoplus_{|I|=n+1}R(\widehat{K}_I)_k{\longrightarrow} 0\]
with the differential $\partial_p$ being the alternating sum of `twisted' restriction maps
\[\partial_p=\sum_{r=0}^p(-1)^r\rho_I^{I\cup \{i_r\}}\]
where $I=\{i_0, i_1, \cdots, i_{r-1}, i_{r+1}, \cdots, i_p\}$ with $0\leq i_0<i_1\cdots<i_{r-1}<i_r<i_{r+1}<\cdots<i_p\leq n$, and $\rho_{I_1}^{I_2}$ is the `twisted' restriction map $R(\widehat{K}_{I_1})_k\to R(\widehat{K}_{I_2})_k$ for $I_1\subset I_2$. The homology of the sequence vanishes except at the last slot, which is isomorphic to $K^n_K(K, (\tau_K^K)^{k+\textsf{h}^\vee})$ {(cf. \cite[Theorem 5.3]{M1})}. 

As mentioned at the beginning of this section, Segal's spectral sequence is equivalent to the generalized Mayer-Vietoris sequence applied to a {closed} cover specified by the simplicial structure: {If $p: K\to \Delta$ is the map taking a group element $k$ to the representative in $\Delta$ of the conjugacy class of $k$, and $V_i:=\Delta\setminus\mathcal{O}(\Delta_{1, 2, \cdots, \widehat{i}, \cdots, n})$ is the complement of an open neighbourhood of the subsimplex $\Delta_{1, 2, \cdots, \widehat{i}, \cdots, n}$ (the subsimplex opposite to the vertex $i$) in $\Delta$, then $\{p^{-1}(V_i)\}_{i=0}^n$ is a closed cover associated to the simplicial structure of $K$. It has the properties that $p^{-1}(V_i)$ is $K$-invariant, and $\bigcap_{i\in I}p^{-1}(V_i)$ deformation retracts onto any conjugacy class $\mathcal{C}$ with $p(\mathcal{C})\in\text{int}(\Delta_I)$, and the deformation retract is a proper map. The latter condition ensures that $K_K^*(\bigcap_{i\in I}p^{-1}(V_i))$ is isomorphic to $K_K^*(\mathcal{C})$ as $K$-theory is a compactly supported cohomology theory. This argument of using the Mayer-Vietoris sequence can be carried over to prove the Baum-Connes and Connes-Kasparov isomorphisms with coefficient $\Gamma_0(\tau_G^G)$, but with some modifications. As $G$ is not compact, there does not exist a finite closed cover such that there is an equivariant proper deformation retract of each closed set onto a conjugacy class of $G$. The following proposition gives the modified arguments which get around the issue caused by the non-compactness of $G$.}

\begin{proposition}\label{opencover}
	Let   {$G$ be a connected semi-simple Lie group with torsion-free fundamental group, and $K$ a maximal compact subgroup.} If there exists a finite {closed} cover $\{\mathcal{F}_i\}_{i=0}^n$ of $G$ such that
	{\begin{enumerate}
		\item $\mathcal{F}_i$ is invariant under the conjugation by $G$, and
		\item Any $\mathcal{F}_i$ and their intersection are stratifications of finitely many strata $\mathcal{O}_j$ (i.e. $Z_j:=\overline{\mathcal{O}}_j=\bigcup_{j\leq i}\mathcal{O}_j$ and $\mathcal{O}_j$ is open in $Z_j$) such that each stratum is either 
		\begin{enumerate}
			\item\label{type1} a finite disjoint union of fiber bundles over contractible sets $C$ whose one-point compactifications are contractible (e.g. closed cones) and on which $G$ acts trivially, or
			\item\label{type2}a finite disjoint union of conjugacy classes.
		\end{enumerate}
	\end{enumerate}}
	then
	\begin{enumerate}
		\item the assembly map 
		\[KK^G_{\bullet}(G/K, (G, \tau_G^G))\longrightarrow K_\bullet(\Gamma_0(\tau_G^G)\rtimes G)\]
		is an isomorphism.
		\item The Connes-Kasparov map
		\[K_\bullet(\Gamma_0(\tau_K^G)\rtimes K)\longrightarrow K_\bullet(\Gamma_0(\tau_G^G)\rtimes G)\]
		is an isomorphism.
	\end{enumerate}
\end{proposition}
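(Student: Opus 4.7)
The plan is to run a Mayer-Vietoris argument for the closed cover $\{\mathcal{F}_i\}_{i=0}^n$ in parallel on the two sides of each assembly map and to apply the five-lemma to compare the resulting long exact sequences. Since the $\mathcal{F}_i$ are $G$-invariant, so are their intersections $\mathcal{F}_I := \bigcap_{i\in I}\mathcal{F}_i$, and both the Baum-Connes and Connes-Kasparov maps are natural with respect to $G$-equivariant inclusions of closed subsets. Consequently it suffices to check that the two assembly maps in (1) and (2) are isomorphisms with the spaces/algebras replaced by the restriction of the data to each $\mathcal{F}_I$. Applying a second induction over the strata $\mathcal{O}_j$ of $\mathcal{F}_I$ using the six-term exact sequence of the pair $(Z_j, Z_{j-1})$, and the five-lemma once more, the problem is reduced to verifying the BC and CK isomorphisms on each single stratum $\mathcal{O}_j$.

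For a stratum of type (\ref{type1}), $\mathcal{O}_j$ is a finite disjoint union of fiber bundles $F\to\mathcal{O}_j\to C$ over contractible $C$ with trivial $G$-action; by contractibility each such bundle is trivializable, so $\mathcal{O}_j \cong F\times C$ as a $G$-space. The K\"unneth formula (which applies here since $G$ acts trivially on $C$) splits both sides of each assembly map as a tensor product with the compactly supported K-theory $K^\bullet_c(C)\cong\widetilde{K}^\bullet(C^+)$, which vanishes because $C^+$ is contractible by hypothesis. Hence both sides vanish and the assembly maps are trivially isomorphisms on such strata. For a stratum of type (\ref{type2}), $\mathcal{O}_j$ is a finite disjoint union of conjugacy classes $\mathcal{C} = G/Z_G(g)$, and $G$-equivariant Morita/induction equivalence identifies $K_\bullet(\Gamma_0(\mathcal{C},\tau_G^G)\rtimes G)$ with the twisted K-theory of the crossed product by the reductive centralizer $Z_G(g)$, and similarly identifies $KK^G_\bullet(G/K,(\mathcal{C},\tau_G^G))$ with the corresponding $KK^{Z_G(g)}$-group (using that $G/K$ serves as a model for $\underline{E}Z_G(g)$, since $Z_G(g)$ is an almost-connected reductive subgroup of $G$). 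The BC and CK isomorphisms on the stratum then follow from the classical Connes-Kasparov isomorphism for almost-connected reductive Lie groups \cite{CEN,W}.

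The main obstacle, I expect, is making the Mayer-Vietoris and inductive five-lemma argument fully rigorous at the level of twisted equivariant KK-theory and of crossed-product K-theory: one must check that the $G$-equivariant proper deformation retracts built into the hypotheses do induce KK-equivalences compatible with both assembly maps, and that the long exact sequences in twisted equivariant K-theory attached to the pair $(Z_j,Z_{j-1})$ as well as the Mayer-Vietoris sequence for the closed cover $\{\mathcal{F}_i\}$ exist and are natural in the required sense (including in the noncompact setting, where compactly supported cohomology conventions must be handled carefully). A more minor point is that the twist $\tau_G^G$ restricts canonically to every $G$-invariant subset considered, so that the twisted Thom and Poincar\'e duality isomorphisms used at intermediate steps remain coherent under restriction and compatible with the naturality of the two assembly maps.
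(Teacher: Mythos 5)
Your proposal follows essentially the same strategy as the paper: a Mayer--Vietoris/five-lemma induction over the closed cover, a second induction over the strata via the six-term exact sequences of the pairs $(Z_j, Z_{j-1})$ (the paper secures exactness of the corresponding crossed-product sequences by citing Kirchberg--Wassermann), and, on conjugacy-class strata, a reduction by induction/Morita equivalence to the Baum--Connes isomorphism with coefficients for the almost-connected stabilizer $Z_G(g)$, quoting Chabert--Echterhoff--Nest, together with the observation that $G/K$ is also a universal proper $Z_G(g)$-space. The one genuine divergence is on the type-(a) strata: you invoke a K\"unneth argument to show both sides vanish because $K^\bullet_c(C)\cong\widetilde K^\bullet(C^+)=0$, whereas the paper compares the six-term sequences attached to $X\times C\subset X\times C_+\supset X\times\{+\}$ and uses the proper equivariant deformation retract of $X\times C_+$ onto $X\times\{+\}$. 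Your route works, but note that it implicitly requires the twist on $X\times C$ to be pulled back from $X$ (true since $C$ is contractible) and that $C_0(C)$ be $KK$-contractible, which rests on the UCT/bootstrap class rather than on bare $K$-theory vanishing; the paper's route stays entirely within the six-term-sequence formalism it has already set up for the stratification, so it avoids that extra machinery. The technical points you flag at the end (naturality of the assembly map under restriction to invariant closed subsets and open inclusions, and exactness of the relevant sequences of crossed products) are exactly the ones the paper addresses, by functoriality of the assembly map and by the exactness theorem of Kirchberg--Wassermann respectively.
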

\begin{proof}
	{We shall first prove the general fact that the Baum-Connes isomorphism holds for a stratification if it holds for each stratum. Note that there is a short exact sequence of $G$-$C^*$-algebras
	\[0\longrightarrow \Gamma_0(\tau_G^G|_{\mathcal{O}_j})\longrightarrow\Gamma_0(\tau_G^G|_{Z_j})\longrightarrow\Gamma_0(\tau_G^G|_{Z_{j-1}})\longrightarrow 0.\]
	 By \cite[Theorem  6.8]{EW}, we further have the short exact sequence of crossed product $C^*$-algebras
	\[0\longrightarrow\Gamma_0(\tau_G^G|_{\mathcal{O}_j})\rtimes G\longrightarrow\Gamma_0(\tau_G^G|_{Z_j})\rtimes G\longrightarrow\Gamma_0(\tau_G^G|_{Z_{j-1}})\rtimes G\longrightarrow 0.\]
	These two short exact sequences induce respectively two six-term exact sequences of $K$-theory groups connected by the Baum-Connes assembly maps:
	\begin{eqnarray*}
		\xymatrix{\ar[r]& KK_\bullet^G(G/K, (\mathcal{O}_i, \tau_G^G|_{\mathcal{O}_i}))\ar[r]\ar[d]^{\text{BC}}&KK_\bullet^G(G/K, (Z_i, \tau_G^G|_{Z_i}))\ar[r]\ar[d]^{\text{BC}}&KK_\bullet^G(G/K, (Z_{i-1}, \tau_G^G|_{Z_{i-1}}))\ar[r]\ar[d]^{\text{BC}}&\cdots\\ \ar[r]& K_\bullet(\Gamma_0(\tau_G^G|_{\mathcal{O}_i})\rtimes G)\ar[r]& K_\bullet(\Gamma_0(\tau_G^G|_{Z_i})\rtimes G)\ar[r]& K_\bullet(\Gamma_0(\tau_G^G|_{Z_{i-1}})\rtimes G)\ar[r]&\cdots}
	\end{eqnarray*}
	The above diagram commutes because the Baum-Connes assembly map is functorial with repsect to restriction and open inclusion. Using the above diagram and applying induction on $Z_i$,  {we prove} the Baum-Connes isomorphism for the stratification.}

{Now we would like to show that the Baum-Connes isomorphism holds for $\mathcal{F}_i$ and their intersection. To do this it suffices to show that the Baum-Connes isomorphism holds for the two types of strata in the proposition. For strata of type (\ref{type1}), i.e. a fiber bundle over $C$, it is necessarily trivial as $C$ is contractible. We may let this fiber bundle be $X\times C$, where $G$ acts on $C$ trivially. Let $C_+$ be the one-point compactification of $C$. Consider the short exact sequence of $G$-$C^*$-algebras
\[0 \longrightarrow \Gamma_0(\tau_G^G|_{X\times C}) \longrightarrow \Gamma_0(\tau_G^G|_{X\times C_+})\longrightarrow\Gamma_0(\tau_G^G|_{X\times \{+\}}) \longrightarrow 0.\]
Again by \cite[Theorem 6.8]{EW}, we further have the short exact sequence of crossed product $C^*$-algebras
\[0\longrightarrow\Gamma_0(\tau_G^G|_{X\times C})\rtimes G\longrightarrow\Gamma_0(\tau_G^G|_{X\times C_+})\rtimes G\longrightarrow\Gamma_0(\tau_G^G|_{X\times\{+\}})\rtimes G\longrightarrow 0.\]
By the fact that there is a proper equivariant deformation retract of $X\times C_+$ onto $X\times \{+\}$ and the two six-term exact sequences of $K$-theory groups induced by the above two short exact sequences, we have that both $KK_\bullet^G(G/K, (X\times C, \tau_G^G|_{X\times C}))$ and $K_\bullet(\Gamma_0(\tau_G^G|_{X\times C})\rtimes G)$ vanish and thus the Baum-Connes isomorphism holds for $X\times C$. }
	
	{Next we will show that the Baum-Connes assembly map is an isomorphism for strata of type (\ref{type2}), i.e. conjugacy classes. Let $H$ be the stabilizer subgroup of an element $g$ of the conjugacy class $\mathcal{C}$. So $\mathcal{C}\cong G/H$. Consider the following diagram
\begin{eqnarray}\label{conjugationinduction}
		\xymatrix{KK^G_\bullet(G/K, (\mathcal{C}, \tau_G^G|_{\mathcal{C}}))\ar[r]\ar[d]& K_\bullet(\Gamma_0(\tau_G^G|_{\mathcal{C}})\rtimes G)\ar[d]\\ KK^{H}_\bullet(G/K, (\{g\}, \tau_G^G|_{\{g\}}))\ar[r]& K_\bullet(\Gamma_0(\tau_G^G|_{\{g\}})\rtimes {H})}
	\end{eqnarray}
	The bottom map of Diagram (\ref{conjugationinduction}) is the Baum-Connes assembly map for $H$-action with coefficient $\mathcal{K}(\mathcal{H})$ (note that $G/K$ is also a universal space for proper $H$-action (\cite[Remark after Corollary 1.9]{BCH})) and an isomorphism by virtue of \cite[Theorem 1.2]{CEN}. It remains to {be shown} that Diagram (\ref{conjugationinduction}) is commutative and the two vertical maps are isomorphisms to establish that the top map of (\ref{conjugationinduction}) is indeed an isomorphism. The commutativity of the diagram is due to the fact that the element $[p]\in K_0(C_0(G/K)\rtimes G)$ used to define the Baum-Connes map for $G$-action as in the beginning of Section \ref{ringstr} restricts to the corresponding element $[q]\in K_0(C_0(G/K)\rtimes H)$ used to define the Baum-Connes map for $H$-action. The right vertical map is an isomorphism because by the slow-paced version of the path group construction of $\tau_G^G$ similar to that for $\tau_K^K$ as in \cite{M1}: $\tau_G^G|_{\mathcal{C}}$ is of the form $G\times_{H}\mathcal{K}(\mathcal{H}_Q)$ for some action by $H$ on $\mathcal{K}(\mathcal{H}_Q)$. It follows that $\text{Ind}_{H}^G(\Gamma_0(\tau_G^G|_{\{g\}})\rtimes H)=\Gamma_0(\tau_G^G|_{\mathcal{C}})\rtimes G$ and induction yields strongly Morita equivalent crossed product $C^*$-algebras (cf. \cite{P}). As to the left vertical map, one can establish that it is an isomorphism by adapting the proof of \cite[Lemma 2.4.2]{We} in a straightforward way, which again involves strong Morita equivalence arising from induction: we have the isomorphisms
	\begin{align*}
		&KK^G_\bullet(G/K, (\mathcal{C}, \tau_G^G|_{\mathcal{C}}))\\
		\cong &K_\bullet((\Gamma_0(\tau_G^G|_{\mathcal{C}})\otimes \Gamma_0(\text{Cliff}(TG/K)))\rtimes G)\\
		\stackrel{\text{Induction}}{\cong} &K_\bullet((\Gamma_0(\tau_G^G|_{g})\otimes \Gamma_0(\text{Cliff}(TG/K)))\rtimes H)\\
		\cong &KK_\bullet^{H}(G/K, (\{g\}, \tau_G^G|_{\{g\}})).
	\end{align*}
Thus the top map, which is the Baum-Connes assembly map for $\mathcal{C}$, is an isomorphism.} 
	
	{Finally, consider the two Mayer-Vietoris sequences with respect to the closed cover in the proposition for computing $KK^G_\bullet(G/K, (G, \tau_G^G))$ and $K_\bullet(\Gamma_0(\tau_G^G)\rtimes G)$ connected by various Baum-Connes maps. By the commutativity of the (twisted) restriction map and the Baum-Connes map (again because of the functoriality of the Baum-Connes map),  
the five-lemma and induction on the number of closed sets in the closed cover, we conclude that the assembly map for the coefficient $\Gamma_0(\tau_G^G)$ is an isomorphism.}

{Note that the same argument can be used to establish the Connes-Kasparov isomorphism with coefficient as well. }
\end{proof}
\begin{proposition}\label{goodcover}
	  {Any connected, real semi-simple \emph{linear} group with torsion-free fundamental group} satisfies the conditions in Proposition \ref{opencover}.
\end{proposition}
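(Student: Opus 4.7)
The plan is to construct the cover $\{\mathcal{F}_i\}$ as preimages under the Steinberg map $\chi\colon G \to \Sigma$ of a suitable finite closed cover $\{V_i\}$ of the adjoint quotient $\Sigma$ (which parameterizes closed, i.e., semisimple, conjugacy classes of $G$). Since $\chi$ is constant on conjugacy classes, the $G$-invariance condition (1) of Proposition \ref{opencover} will be automatic. For a real linear semisimple $G$ with torsion-free fundamental group, $\Sigma$ is stratified by conjugacy classes of Cartan subgroups $H \cong H_c \times H_s$, where $H_c$ is a compact torus and $H_s \cong \mathbb{R}^k$ is a vector group; within each stratum, elements are parameterized up to the appropriate real Weyl group action.

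To construct $\{V_i\}$, I would combine two ingredients. On the compact (elliptic) directions, coming from the $H_c$ factors, I would refine the simplicial Weyl alcove cover from the paragraph immediately preceding Proposition \ref{opencover}, mirroring the argument used in the compact case. On the noncompact (hyperbolic) directions, coming from the $H_s$ factors, I would use the finite cover by closures of Weyl chambers in the split Cartan subalgebras of $G$; each such closed chamber is a proper closed convex cone in a real vector space, and its one-point compactification is contractible (collapsing along the radial direction from the apex), so it qualifies as a contractible base of the type demanded in condition (2).

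For condition (2), the Jordan decomposition $g = g_s g_u$ induces a stratification of each intersection $\bigcap_{i \in I} \mathcal{F}_i$ indexed by the conjugacy type of the centralizer $Z_G(g_s)$ together with the face of the alcove-chamber structure occupied by the semisimple invariants. A stratum whose face lies entirely in the compact elliptic region is a finite disjoint union of $G$-conjugacy classes, matching type (\ref{type2}). A stratum whose face has hyperbolic directions fibers $G$-equivariantly over the corresponding closed convex cone, with $G$ acting trivially on the cone parameter (since the cone sits inside $H_s/W$, on which $G$ acts trivially by construction), matching type (\ref{type1}).

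The main obstacle will be controlling the interplay of semisimple and unipotent elements at the boundaries between compact and noncompact directions, and checking that all implicit deformation retracts are simultaneously $G$-equivariant and proper. The torsion-free fundamental group hypothesis enters crucially here: it guarantees connectedness of the centralizers $Z_G(g_s)$ for semisimple $g_s$, which ensures that the stratification of each $\mathcal{F}_I$ remains combinatorially clean (finitely many strata of the right types) and reduces the verification to the building-block situations already treated in the proof of Proposition \ref{opencover}.
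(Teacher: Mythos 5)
You begin exactly where the paper does -- the Steinberg map built from characters of the fundamental representations, with $G$-invariance of preimages automatic and each fiber a stratification of conjugacy classes indexed by Jordan type -- but the heart of the proof is the construction of a finite \emph{closed} cover of the image $\mathrm{St}(G)\subseteq\mathbb{C}^n$ whose pieces and all of whose intersections pull back to stratifications by sets of precisely types (\ref{type1}) and (\ref{type2}), and this is where your proposal has a genuine gap. Your proposed pieces (alcove cells in the elliptic directions times closed Weyl chambers in the split directions) are not adapted to the loci where the fiber diffeomorphism type of $\mathrm{St}$ jumps: eigenvalue multiplicities coalescing, real eigenvalues degenerating into complex-conjugate pairs, unipotent strata appearing, etc. These degeneration loci are not unions of faces of an alcove--chamber decomposition, so the preimage of one of your closed pieces is generally not a fiber bundle over it, nor obviously a stratification of the two allowed types. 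You acknowledge this yourself by deferring "the interplay of semisimple and unipotent elements at the boundaries" as "the main obstacle" -- but that obstacle \emph{is} the proposition. The paper's resolution, entirely absent from your sketch, is to observe that each union of fibers of a fixed diffeomorphism type has real semialgebraic image $V_i$ in $\mathbb{R}^{2n}$, to pass to the one-point compactification $S^{2n}$ (using that semialgebraic sets admit semialgebraic one-point compactifications), and to invoke the finite triangulability of closed bounded semialgebraic sets \cite{BCR} to produce a simplicial complex $T$ simultaneously compatible with all the $\overline{V}_i$ and with the point at infinity $N$. The closed cover is then taken to be the preimages of closed stars of vertices in the barycentric subdivision, together with the sets $\mathcal{F}'_j$ coming from stars of edges ending at $N$ with $N$ deleted; closed stars automatically stratify into a barycenter (whose preimage is a stratification of conjugacy classes, type (\ref{type2})) and half-open subsimplices contained in single $V_i$'s with contractible one-point compactifications (type (\ref{type1})), and the same holds for all intersections.

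Two smaller points. First, your treatment of the point at infinity is missing: the base $\mathrm{St}(G)$ is unbounded, and without the careful removal of $N$ in the sets $\mathcal{F}'_j$ one does not get a cover of $G$ by sets with the required stratification properties. Second, your claim that the torsion-free fundamental group hypothesis enters through connectedness of centralizers $Z_G(g_s)$ is not how the hypothesis functions in the paper's argument; the proof of Proposition \ref{opencover} only needs each fiber to be a \emph{finite} stratification of conjugacy classes, and the verification in Proposition \ref{goodcover} proceeds through the semialgebraic structure rather than through any centralizer connectedness statement.
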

\begin{proof}
	For a real semisimple linear algebraic group $G$, consider the characters of its fundamental representations $\chi_1, \chi_2, \cdots, \chi_n$ (for a real semisimple Lie group which is not complex, we define its fundamental representations to be the restriction of those of the complexification of the Lie group). We have the Steinberg map
	\begin{align*}
		\text{St}:\ G&\to \mathbb{C}^n\\
		g&\mapsto (\chi_1(g), \chi_2(g), \cdots, \chi_n(g))
	\end{align*}
	which is conjugation invariant. A useful example to bear in mind is $G=SL(n+1, \mathbb{C})$, where the Steinberg map sends $g$ to the elementary symmetric polynomials {evaluated at the eigenvalues of $g$}. The key property of the Steinberg map which enables us to find a desired {closed} cover is that each fiber of the map is a {stratification} of conjugacy classes {as in Proposition \ref{opencover} (\ref{type2})}. 
	{The strata are indexed and partially ordered by the sizes of the Jordan blocks.} 
	Note that any fiber which is the most generic is itself one semisimple conjugacy class {which is regular}. In the case of $G=SL(n+1, \mathbb{C})$ such a fiber consists of matrices with the same list of distinct eigenvalues. In fact the diffeomorphism types of fibers of the Steinberg map (there are finitely many of them) can be partially ordered by genericity, and determined by their images under the Steinberg map. For example, when $G=SL(n+1, \mathbb{C})$ the diffeomorphism type of a fiber depends on the multiplicities of the eigenvalues of the matrices in that fiber, and a fiber with smaller multiplicities of eigenvalues is more generic. When $G=SL(n+1, \mathbb{R})$ the diffeomorphism type also depends on the number of conjugate pairs in the list of eigenvalues, in addition to multiplicities. In general, the image of the fibers of the same diffeomorphism type is a real semialgebraic set in $\mathbb{C}^n$ viewed as $\mathbb{R}^{2n}$, which by definition is a finite union of sets of the form
	\[\{(x_1, y_1, \cdots, x_n, y_n)\in\mathbb{R}^{2n}|\ f_i(x_1, y_1, \cdots, x_n, y_n)*0,\ f_i\text{ is a polynomial for }1\leq i\leq p\}\]
	where $*$ can be equality or inequalities (`$>$', `$<$', `$\leq$', `$\geq$' or `$\neq$'). Let $V_1, \cdots, V_m$ be the real semialgebraic sets, each of which is the image of the union of fibers of one diffeomorphism type. Then $\{V_i\}_i^m$ can be partially ordered by genericity, and gives a stratification of $\text{St}(G)$ in that
	\[\overline{V}_i=\bigcup_{V_j\leq V_i}V_j\]
	where $V_j\leq V_i$ means that $V_j$ is less generic than $V_i$.  {Note that $\text{St}^{-1}(V_i)\to V_i$ is a fiber bundle: each fiber $\text{St}^{-1}(v)$ is difeomorphic by definition, and a stratification of finitely many conjugacy classes, each of which has a canonical representative (e.g. Jordan canonical form for $SL(n, \mathbb{C})$ and real Jordan canonical form for $SL(n, \mathbb{R})$). These representatives give local sections of the map $\text{St}^{-1}(V_i)\to V_i$, showing that the map is locally trivial.}


{Now we  view $S^{2n}$ as the 1-point compactification of $\mathbb{R}^{2n}\cong\mathbb{C}^n$, denote the  {point at infinity} by $N$ and consider $\overline{V}_1, \cdots, \overline{V}_m$. By \cite[Proposition 2.5.9]{BCR}, those $\overline{V}_i$ which are not bounded admit one-point compactifications which are also real semi-algebraic and will be regarded as sitting inside $S^{2n}$. By applying \cite[Theorem 9.4.1]{BCR} which asserts finite triangulability of closed and bounded real semi-algebraic sets to those $\overline{V}_i$ which are bounded and the one-point compactifications of those $\overline{V}_j$ which are not bounded, we have that there exists a finite simplicial complex $T$ and a homeomorphism
	\[\Phi: T\to S^{2n}\]
	such that each set $\overline{V}_i$ is the image under $\Phi$ a union of open subsimplices of $T$, and $N=\Phi(v')$ for some vertex $v'$ of $T$. By $V_i=\overline{V}_i\setminus\bigcup_{V_j<V_i}V_j$ and induction on the strata $V_i$, we have that each $V_i$ is also the image of under $\Phi$ of a union of open subsimplices.}
	
{Let $\{v_1, \cdots, v_m\}$ be the set of vertices of $T$ excluding $v'$, $\widetilde{T}$ be the simplicial complex obtained by the barycentric subdivision of $T$, and $\ell_1, \cdots, \ell_p$ be the 1-dimensional open subsimplices (i.e. open intervals) in $\widetilde{T}$ whose closure contains $v'$. We define $\text{Star}_{\widetilde{T}}(D)$ (respectively $\text{Star}^k_{\widetilde{T}}(D)$), the star (respectively the $k$ dimensional star) in $\widetilde{T}$ of an open subsimplex $D$, to be the union of all the open subsimplices (respectively the $k$ dimensional open subsimplices) whose closure contains the given open subsimplex. Let 
\begin{align*}
	\mathcal{F}_i&:=\text{St}^{-1}(\Phi(\overline{\text{Star}_{\widetilde{T}}(v_i)})),\ 1\leq i\leq m\\
	\mathcal{F}'_j&:=\text{St}^{-1}(\Phi(\overline{\text{Star}_{\widetilde{T}}(\ell_j)}\setminus\{N\})), 1\leq j\leq p.
\end{align*} 
Then we have the following.
\begin{enumerate}
	\item\label{starvertex} Let $d$ be the dimension of $\overline{\text{Star}_{\widetilde{T}}(v_i)}$. Then $\overline{\text{Star}_{\widetilde{T}}(v_i)}$ is a stratification of $\mathcal{O}_0:=\text{Star}_{\widetilde{T}}^0(v_i)=v_i$, $\mathcal{O}_1:=\overline{\text{Star}_{\widetilde{T}}^1(v_i)}\setminus\overline{\text{Star}_{\widetilde{T}}^0(v_i)}$, $\cdots, \mathcal{O}_{k+1}:=\overline{\text{Star}_{\widetilde{T}}^{k+1}(v_i)}\setminus\overline{\text{Star}_{\widetilde{T}}^{k}(v_i)}, \cdots$, $\mathcal{O}_d:=\overline{\text{Star}_{\widetilde{T}}^d(v_i)}\setminus\overline{\text{Star}_{\widetilde{T}}^{d-1}(v_i)}$. Each stratum, except $\mathcal{O}_0$, is a finite disjoint union of half-open subsimplices $C$ of $\widetilde{T}$ whose one-point compactifications are contractible. Each of these half-open subsimplices $C$ is a subset of an open subsimplex of $T$ and hence a subset of some $V_i$. It follows that each of $\text{St}^{-1}(\Phi(\mathcal{O}_k))$ for $1\leq k\leq d$ is a finite disjoint union of fiber bundles over half-open subsimplices $C$ of $\widetilde{T}$. Moreover, $\text{St}^{-1}(\Phi(\mathcal{O}_0))$ is a stratification of conjugacy classes, as we noted before. Hence $\mathcal{F}_i$ is a stratification of sets of types (\ref{type1}) and (\ref{type2}) in Proposition \ref{opencover}.
	\item\label{starline} Similarly, $\overline{\text{Star}_{\widetilde{T}}(\ell_j)}\setminus\{v'\}$ is stratified by $\mathcal{O}_1:=\overline{\text{Star}_{\widetilde{T}}^1(\ell_j)}\setminus\{v'\}=\overline{\ell_j}\setminus\{v'\}$, $\mathcal{O}_2:=\overline{\text{Star}_{\widetilde{T}}^2(\ell_j)}\setminus\overline{\text{Star}_{\widetilde{T}}^1(\ell_j)}, \cdots$, $\mathcal{O}_k:=\overline{\text{Star}_{\widetilde{T}}^k(\ell_j)}\setminus\overline{\text{Star}_{\widetilde{T}}^{k-1}(\ell_j)}$, $\cdots, \mathcal{O}_d:=\overline{\text{Star}_{\widetilde{T}}^d(\ell_j)}\setminus\overline{\text{Star}_{\widetilde{T}}^{d-1}(\ell_j)}$, where $d$ is the dimension of $\overline{\text{Star}_{\widetilde{T}}(\ell_j)}$. Each stratum again is a finite disjoint union of half-open subsimplices of $\widetilde{T}$, each of which is a subset of some $V_i$ and has contractible one-point compactification. Thus the inverse images $\text{St}^{-1}(\Phi(\mathcal{O}_1)), \cdots, \text{St}^{-1}(\Phi(\mathcal{O}_d))$ are of type (\ref{type1}) in Proposition \ref{opencover} and stratify $\mathcal{F}'_j$.
	\item The intersection $\bigcap_{i\in I}\overline{\text{Star}_{\widetilde{T}}(v_i)}\cap\bigcap_{j\in J}(\overline{\text{Star}_{\widetilde{T}}(\ell_j)}\setminus\{v'\})$, where the set $I$ is non-empty and $J$ may be empty, is the closure of the union of some open subsimplices of $\widetilde{T}$ whose closure contains $b$, the barycenter of the closed subsimplex of the least dimension of $T$ containing $\{v_i\}_{i\in I}\cup\{\ell_j\}_{j\in J}$. Similar to Case (\ref{starvertex}), this intersection is stratified by $\{b\}$ and a finite disjoint union of half-open subsimplices of $\widetilde{T}$ each of which is a subset of some $V_i$ and has contractible one-point compactification. Hence $\bigcap_{i\in I}\mathcal{F}_i\cap\bigcap_{j\in J}\mathcal{F}'_j$ is a stratification of sets of types (\ref{type1}) and (\ref{type2}) in Proposition \ref{opencover}. 
	\item The intersection $\bigcap_{j\in J}(\overline{\text{Star}_{\widetilde{T}}(\ell_j)}\setminus \{v'\})$ is $A\setminus\{v'\}$, where $A$ is the closure of the union of some open subsimplices of $\widetilde{T}$ whose closure contains $\ell_{bv'}$, where $b$ is the barycenter of the closed subsimplex of the least dimension of $T$ containing $\{\ell_j\}_{j\in J}$ and $\ell_{bv'}$ is the open 1-dimensional subsimplex with endpoints $b$ and $v'$. Similar to Case (\ref{starline}), this intersection is stratified by a finite disjoint union of half-open subsimplices of $\widetilde{T}$ each of which is a subset of some $V_i$ and has contractible one-point compactification. Hence $\bigcap_{j\in J}\mathcal{F}'_j$ is a stratification of sets of type (\ref{type1}) in Proposition \ref{opencover}.
\end{enumerate}
We conclude that $\{\mathcal{F}_i\}_{i=1}^m\cup\{\mathcal{F}'_j\}_{j=1}^p$ is a closed cover of $G$ satisfying the conditions in Proposition \ref{opencover}.}
\end{proof}
\begin{theorem}\label{mainthm}
	\begin{enumerate}
		\item If   {$G$ is a connected, real semisimple linear group with torsion-free fundamental group}, then both the Baum-Connes map
	\[KK^G_\bullet(G/K, (G, \tau_G^G))\to K_\bullet(\Gamma_0(\tau_G^G)\rtimes G)\]
	and the Connes-Kasparov map
	\[K_\bullet(\Gamma_0(\tau_K^G)\rtimes K)\to K_\bullet(\Gamma_0(\tau_G^G)\rtimes G)\]
	are isomorphism.
		\item\label{ringstructure} The equivariant twisted $K$-homology ring $K^K_\bullet(K, \tau_K^K)$ is naturally isomorphic to both $KK^G_\bullet(G/K, (G, \tau_G^G))$ and $K_\bullet(\Gamma_0(\tau_K^G)\rtimes K)$ and hence induces ring structures on \\ $KK^G_\bullet(G/K, (G, \tau_G^G))$, $K_\bullet(\Gamma_0(\tau_K^G)\rtimes K)$, as well as $K_\bullet(\Gamma_0(\tau_G^G)\rtimes G)$ through the Baum-Connes map (or Connes-Kasparov map).
	\end{enumerate}
\end{theorem}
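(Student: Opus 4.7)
The plan is to deduce Theorem \ref{mainthm} by combining the preceding Propositions \ref{opencover} and \ref{goodcover} with a chain of natural isomorphisms that transports the Freed-Hopkins-Teleman ring structure from $K^K_\bullet(K, \tau_K^K)$ to the three target groups $KK^G_\bullet(G/K, (G, \tau_G^G))$, $K_\bullet(\Gamma_0(\tau_K^G)\rtimes K)$, and $K_\bullet(\Gamma_0(\tau_G^G)\rtimes G)$.

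For part (1), the argument is essentially the composition of the two preceding propositions. First I would feed a connected real semisimple linear $G$ with torsion-free fundamental group into Proposition \ref{goodcover} to produce, via the Steinberg map and a stratification of its fibers by Jordan block type, a finite closed $G$-invariant cover of $G$ whose members and mutual intersections decompose into finitely many strata of the two permissible types (fiber bundles over contractible bases with trivial $G$-action, and finite unions of conjugacy classes). Then I invoke Proposition \ref{opencover}, whose Mayer-Vietoris plus five lemma argument promotes the stratum-wise Baum-Connes and Connes-Kasparov isomorphisms to the same isomorphisms for all of $G$.

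For part (2), I would assemble the following chain of natural isomorphisms. Starting from $K^K_\bullet(K, \tau_K^K)$, equivariant Poincar\'e duality for the compact group $K$ yields $K_K^{\bullet+\dim K}(K, \tau_K^K)$; the inverse of the Thom-type isomorphism $i^*$ constructed in Section \ref{thetwists} carries this to $K_K^{\bullet+\dim G}(G, \tau_K^G)$; the Green-Julg theorem identifies this with $K_\bullet(\Gamma_0(\tau_K^G)\rtimes K)$; the Connes-Kasparov isomorphism from part (1) sends this to $K_\bullet(\Gamma_0(\tau_G^G)\rtimes G)$; finally, inverting the Baum-Connes isomorphism from part (1) produces $KK^G_\bullet(G/K, (G, \tau_G^G))$. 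Each arrow is natural, so the ring structure on $K^K_\bullet(K, \tau_K^K)$ transports uniquely to each of the three targets, and the three induced structures are mutually identified by construction via the commutative diagram (\ref{commdiag2}).

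The main obstacle is really concentrated in the preceding propositions: it is the production of the stratified closed cover in Proposition \ref{goodcover} (which requires the Steinberg-map/real semialgebraic geometry input) and the careful Mayer-Vietoris-plus-induction argument in Proposition \ref{opencover} that reduces the assembly maps to the case of a single conjugacy class, where the Chabert-Echterhoff-Nest result applies. Given those, the remaining work for Theorem \ref{mainthm} is mostly bookkeeping: verifying that the composite of isomorphisms above coincides with the map $\phi$ of the introduction, that the ring structure obtained does not depend on the auxiliary choices in the chain, and that the Baum-Connes and Connes-Kasparov isomorphisms are compatible with restriction and descent so that the three induced ring structures agree. This compatibility is immediate from the naturality of equivariant Poincar\'e duality, Green-Julg, and Kasparov's descent.
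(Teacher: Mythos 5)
Your part (1) is exactly the paper's argument: feed Proposition \ref{goodcover} into Proposition \ref{opencover}. For part (2) you arrive at the right conclusion, but by a genuinely different chain at the last two steps. The paper goes $K^K_\bullet(K,\tau_K^K)\xrightarrow{\mathrm{PD}}K_K^{\bullet+\dim K}(K,\tau_K^K)\xrightarrow{(i^*)^{-1}}K_K^{\bullet+\dim G}(G,\tau_K^G)$, and from there reaches $KK^G_\bullet(G/K,(G,\tau_G^G))$ \emph{topologically}: it rewrites $K_K^\bullet(G,\tau_K^G)=KK^K_\bullet(\mathrm{pt},(G,\tau_K^G))$, applies the equivariant Thom isomorphism for $G/K\cong\mathfrak{p}$ to land in $KK^K_\bullet(G/K,(G,\tau_K^G))$, and then inverts the restriction map $KK^G_\bullet(G/K,(G,\tau_G^G))\to KK^K_\bullet(G/K,(G,\tau_K^G))$, which is an isomorphism by Kasparov's Theorem 5.8 (as phrased in \cite{MN}). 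You instead pass through Green--Julg, then the Connes--Kasparov isomorphism, then the inverse of the Baum--Connes isomorphism, both taken from part (1). Your composite is indeed an isomorphism, so it does induce a ring structure; but the paper's route has two advantages you lose. First, it is logically independent of part (1): the identification $K^K_\bullet(K,\tau_K^K)\cong KK^G_\bullet(G/K,(G,\tau_G^G))$ holds for any connected semisimple $G$ with torsion-free fundamental group, without the linearity hypothesis needed for the assembly-map isomorphisms, and this independence is what makes the Baum--Connes statement in part (1) a theorem rather than a tautology. Second, the question of whether your composite agrees with the paper's map $\phi$ (so that the ring structure on $K_\bullet(\Gamma_0(\tau_G^G)\rtimes G)$ obtained ``through the Baum--Connes map'' is the same one) is precisely the content of the commutative-diagram verifications in Sections 2.2--2.3; you assert this agreement by appeal to diagram (\ref{commdiag2}) but that diagram's commutativity is itself a nontrivial input, not a consequence of naturality alone. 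For the mere existence of a ring structure none of this matters, so your proof is acceptable, just less economical in hypotheses.
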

\begin{proof}
	\begin{enumerate}
		\item This is a consequence of Propositions \ref{opencover} and \ref{goodcover}.
		\item The Poincar\'e duality asserts that 
		 {\[\text{PD}: K^K_\bullet(K, \tau_K^K)\cong K_K^{\bullet{+\text{dim }K}}(K, \tau_K^K).\]}
		{As explained in the end of Section \ref{thetwists}, }
		 {\[i^*: K_K^\bullet(G, \tau_K^G)\to K_K^{\bullet{+\text{dim }K-\text{dim }G}}(K, \tau_K^K)\]}
		{is an isomorphism}. By the Green-Julg Theorem, we have that
		\[K_K^\bullet(G, \tau_K^G)\cong K_\bullet(\Gamma_0(\tau_K^G)\rtimes K).\]
		On the other hand, $K_K^\bullet(G, \tau_K^G)$ is by definition the KK-group $KK^K_\bullet(\text{pt}, (G, \tau_K^G))$, and we have the {equivariant Thom isomorphism}
		 \[\begin{aligned}
		  {[G/K]\otimes\cdot}: KK^K_\bullet(\text{pt}, (G, \tau_K^G))\to KK^K_{\bullet+\text{dim }G-\text{dim }K}(G/K, (G, \tau_K^G))\\ (\text{Since}\, G/K\text{ is }K\text{-equivariantly diffeomorphic to }\mathfrak{p}).
		 \end{aligned}\]
		By \cite[Theorem 5.8]{Ka} (phrased as \cite[Proposition 3.2]{MN}), which asserts the isomorphism of the restriction map, we finally get
		 {\[(\text{Res}_G^K)^{-1}: KK^K_\bullet(G/K, (G, \tau_K^G))\cong KK_\bullet^G(G/K, (G, \tau_G^G)).\] }
	\end{enumerate}
\end{proof}
\begin{example}\label{SL(2, C)}
	We exhibit {two} finite {closed covers} of $SL(2, \mathbb{C})$ satisfying the conditions in Proposition \ref{opencover} and hence show that the Baum-Connes and Connes-Kasparov isomorphisms with coefficient are true for $SL(2, \mathbb{C})$. Consider the {Steinberg map of $SL(2, \mathbb{C})$, which is the }trace map
	\[\text{Tr}: SL(2, \mathbb{C})\to \mathbb{C}.\]
	If $r\neq \pm 2$, $\text{Tr}^{-1}(r)$ is a conjugacy class in $SL(2, \mathbb{C})$ diffeomorphic to $SL(2, \mathbb{C})/A$ where $A$ is the subgroup of diagonal matrices. If $r=2$ or $-2$, then $\text{Tr}^{-1}(r)$ consists of two conjugacy classes: the identity $I_2$ (respectively $-I_2$) and the unipotent conjugacy class (respectively the negative of the unipotent conjugacy class). Note that 
	$$\text{Tr}^{-1}(\pm 2)=\left\{\left.\begin{pmatrix}a& b\\ c& \pm2-a\end{pmatrix}\right| a(\pm 2-a)-bc=1,\ a,\ b,\ c\in\mathbb{C}\right\}$$ is a complex affine quadric surface $V((a\pm 1)^2+bc)\subset\mathbb{C}^3$ which is singular at the point $(a, b, c)=(\pm 1, 0, 0)$ representing $\pm I_2$. {So $\text{Tr}^{-1}(\pm 2)$ indeed are stratifications of conjugacy classes as the closure of the unipotent conjugacy classes contains $\pm I_2$. We let $V_1=\{2, -2\}$, the image under the trace map of the less generic conjugacy classes, and $V_2=\mathbb{C}\setminus V_1$, the image of the more generic conjugacy classes. We can take the simplicial complex $T$ as in the proof of Proposition \ref{goodcover} to be a tetrahedron where $\pm 2$ and the point at infinity $N$ are three of the vertices and another complex number (say $i$) is the remaining vertex. One can get a desired closed cover of $SL(2, \mathbb{C})$ by following the construction of the barycentric subdivision of $T$ in the proof of Proposition \ref{goodcover}. Alternatively, there is a more economical closed cover, which consists of 
	\begin{enumerate}
		\item\label{square} the inverse images under the trace map of the two closed squares of side length 2 centered at $2$ and $-2$ respectively, and 
		\item\label{quadrant} the inverse images of the closure of the complement of the two closed squares in the four closed quadrants.
	\end{enumerate}
	Note that each of the inverse images in (\ref{square}) is a stratification of a fiber bundle with fibers being the more generic conjugacy classes, over the complement of a singleton in a closed square (whose one-point compactification is contractible), and the inverse image of that singleton under the trace map which itself is a stratification of the unipotent conjugacy class and the conjugacy class consisting of one element ($I_2$ or $-I_2$). Each of the inverse images in (\ref{quadrant}) is a fiber bundle with fibers being the more generic conjugacy classes, over a set homeomorphic to a closed quadrant whose one-point compactification is also contractible. Moreover, the intersection of these inverse images also satisfies the conditions in Proposition \ref{opencover}.} 
\end{example}
\begin{example}\label{SL(2, R)}
	For $G=SL(2, \mathbb{R})$, there is a similar analysis of its conjugacy classes using the trace map and an analogous construction of a {closed} cover satisfying the conditions in Proposition \ref{opencover}, though the arrangement of the conjugacy classes is more complicated than that of $SL(2, \mathbb{C})$ due to the fact that the ground field $\mathbb{R}$ is not algebraically closed. When $r>2$ or $r<-2$, $\text{Tr}^{-1}(r)$ is a conjugacy class diffeomorphic to $SL(2, \mathbb{R})/A$, where $A$ is the subgroup of diagonal matrices. When $-2<r<2$, $\text{Tr}^{-1}(r)$ is a conjugacy class diffeomorphic to $SL(2, \mathbb{R})/K$, where $K=SO(2)$. For $r=\pm 2$, $\text{Tr}^{-1}(r)$ consists of three conjugacy classes: $I_2$ (respectively $-I_2$), $\left\{\left.g\begin{pmatrix}1&b\\ 0&1\end{pmatrix}g^{-1}\right|b>0, g\in SL(2, \mathbb{R})\right\}$ (respectively $\left\{\left.g\begin{pmatrix}-1&b\\ 0&-1\end{pmatrix}g^{-1}\right|b>0, g\in SL(2, \mathbb{R})\right\}$) and $\left\{\left.g\begin{pmatrix}1&b\\ 0&1\end{pmatrix}g^{-1}\right|b<0, g\in SL(2, \mathbb{R})\right\}$ (respectively $\left\{\left.g\begin{pmatrix}-1&b\\ 0&-1\end{pmatrix}g^{-1}\right|b<0, g\in SL(2, \mathbb{R})\right\}$). $\text{Tr}^{-1}(r)$ in fact is isomorphic to the real cone $V(x^2+y^2-z^2)\subset\mathbb{R}^3$ where the singular point corresponds to $\pm I_2$ and the two connected components of the complement of the singular point corresponds to the two other conjugacy classes. {So $\text{Tr}^{-1}(r)$ is a stratification of conjugacy classes. We let $V_1=\{-2, 2\}$, $V_2=(-\infty, -2)\cup(2, \infty)$ and $V_3=(-2, 2)$, and note by the above analysis that $V_1$ is the image under the trace map of the less generic conjugacy classes while $V_2$ and $V_3$ are the images of the more generic conjugacy classes. The simplicial complex $T$ can be taken to be a triangle where $\pm 2$ and the point at infinity $N$ are the three vertices. Following again the construction in the proof of Proposition \ref{goodcover}, we take 
	\begin{align*}
		\mathcal{F}_1&:=\text{Tr}^{-1}((-\infty, -3]),\\
		\mathcal{F}_2&:=\text{Tr}^{-1}([-3, 0]),\\
		\mathcal{F}_3&:=\text{Tr}^{-1}([0, 3]),\\
		\mathcal{F}_4&:=\text{Tr}^{-1}([3, \infty)),
	\end{align*}
	where the three points $\pm 3$ and 0 act as the three barycenters of the three sides of the triangle.}
\end{example}
\begin{remark}
	If $G_1$ and $G_2$ are two real semisimple Lie groups with {closed} covers satisfying the conditions in Proposition \ref{opencover}, then the Cartesian product of the two {closed} covers is a {closed} cover for $G:=G_1\times G_2$ also satisfying the same conditions. It follows that any product of $SL(2, \mathbb{C})$ and $SL(2, \mathbb{R})$ witnesses the Baum-Connes and Connes-Kasparov isomorphisms with the coefficient of the compact algebra bundle.
\end{remark}

\subsection{Pontryagin product of $K^K_\text{geom}(K, \tau_K^K)$ in terms of geometric cycles}
 {Let $K$ be a simple and simply-connected compact Lie group. Let $\mu\in\Lambda^*_{k-\textsf{h}^\vee}(K)$ be a level $k-\textsf{h}^\vee$ weight and $\mathcal{C}_\mu$ the conjugacy class in $K$ containing $\text{exp}\left(\frac{B^\sharp(\mu)}{k-\textsf{h}^\vee}\right)$. In \cite{M1}, it is shown that the pushforward map 
\[K^K_\bullet(\mathcal{C}_\mu, \text{Cl}(T\mathcal{C}_\mu))\to K^K_\bullet(K, \tau_K^K)\]
induced by the inclusion $\iota_\mu$ of $\mathcal{C}_\mu$ into $K$ sends the fundamental class of $\mathcal{C}_\mu$ to the generator $\sigma_\mu$ (the isomorphism class of the irreducible positive energy representation of $LK$ with lowest weight $\mu$) of $R^{k-\textsf{h}^\vee}(LK)$, which is identified with $K^K_\bullet(K, \tau_K^K)$ by the Freed-Hopkins-Teleman Theorem. It follows that the geometric $K$-homology $K^K_\text{geom}(K, \tau_K^K)$ is generated by the geometric cycles
\[(\mathcal{C}_\mu, \iota_\mu, T\mathcal{C}_\mu), \mu\in\Lambda_{k-\textsf{h}^\vee}^*(K).\]
We will abbreviate the above cycle as $[\mathcal{C}_\mu]$. Recall that the Pontryagin product of $K_\text{geom}^K(K, \tau_K^K)$, which corresponds to the fusion product of the Verlinde algebra through the Freed-Hopkins-Teleman Theorem, is induced by the group multiplication of $K$. In terms of geometric cycles, the Pontryagin product is given by 
\[[\mathcal{C}_{\mu_1}]\cdot[\mathcal{C}_{\mu_2}]=[\mathcal{C}_{\mu_1}\cdot\mathcal{C}_{\mu_2}]\]
where $\mathcal{C}_{\mu_1}\cdot\mathcal{C}_{\mu_2}=\{k_1k_2\in K| k_i\in\mathcal{C}_i\}$. Let $c_{\mu_1\mu_2}^{\mu_3}$ be the structure constant of $R^k(LK)$ defined by 
\[\sigma_{\mu_1}\cdot\sigma_{\mu_2}=\sum_{\mu_3\in\Lambda^*_k(K)}c_{\mu_1\mu_2}^{\mu_3}\sigma_{\mu_3}.\]
It is well-known that the structure constant is nonnegative. By the Freed-Hopkins-Teleman Theorem, we have that 
\[[\mathcal{C}_{\mu_1}\cdot\mathcal{C}_{\mu_2}]=[\coprod_{\mu_3\in\Lambda_k^*(K)}c_{\mu_1\mu_2}^{\mu_3}\mathcal{C}_{\mu_3}].\]}

The multiplicative Horn's Problem, which asks for possible eigenvalues of product of two unitary matrices with given eigenvalues, is known to admit a solution involving the quantum cohomology of Grassmannians (\cite{Bel}), which in turn is isomorphic to the Verlinde algebra when the quantum variable is specialized to 1 (\cite{Wi}). The above interpretation of the Pontryagin product in terms of geometric cycles and the Freed-Hopkins-Teleman Theorem gives an alternative explanation why the multiplicative Horn's Problem involves the Verlinde algebra.

\section{Generators for $KK^G_\bullet(G/K, (G, \tau_G^G))$}\label{sect:generators}

Recall that for  {$K$ a connected and simply-connected compact Lie group}, then its conjugacy classes are in bijection with the quotient $T/W$ of its maximal torus $T$ by the action of the Weyl group $W$.
Meinrenken \cite{M1}
has shown that in this case, the generators of $K_*^{K, geom}(K, \tau_K^K)$ are  { twisted $K$-homology geometric cycles $({\mathcal C}_\mu, T{\mathcal C}_\mu, \iota)$}, where $\mu\in \Lambda_{k-\textsf{h}^\vee}^*(K)$ and ${\mathcal C}_\mu$ is a conjugacy class in $K$  {containing $\text{exp}\left(\frac{B^\flat(\mu)}{k-\textsf{h}^\vee}\right)$}, 
$T{\mathcal C}_\mu$ is the tangent bundle to ${\mathcal C}_\mu$, $\iota: {\mathcal C}_\mu \hookrightarrow K$ is the inclusion map.

Let  {$G$ be a connected, simply-connected Lie group}.  {In this section we give a list of generators of $KK^G_\bullet(G/K, (G, \tau_G^G))$ based on those of $K^K_\bullet(K, \tau_K^K)$ using \emph{correspondences} of twisted equivariant $KK$-theory to be described in \S \ref{sect:correspondences}.} Define ${\mathcal D}_\mu = G\times_K {\mathcal C}_\mu$, on which $G$ acts by left translation on the first factor and trivially on the second factor. Then there is a natural projection map 
$b: {\mathcal D}_\mu \longrightarrow G/K$ {which is proper as the fibers of the map are $\mathcal{C}_\mu$ which is compact}. There is also a natural twisted $G$-${\text{spin}^c}$ map $f: {\mathcal D}_\mu \longrightarrow G$ defined by,  {$[g, c] \longrightarrow gcg^{-1}$}. Then we can form the correspondences,

\begin{eqnarray}\label{generatorcorrespondence}
\xymatrix{& \ar[ddl]_b ({\mathcal D}_\mu , \xi)\ar[ddr]^{f} &\\&&\\G/K && G}
\end{eqnarray}
where $\xi_\mu = G\times_K T{\mathcal C}_\mu$. 
\begin{proposition}
	The correspondence (\ref{generatorcorrespondence}) for $\mu\in\Lambda_{k-\textsf{h}^\vee}(K)$ are generators of $KK^G_\text{geom}(G/K, (G, \tau_G^G))$.
\end{proposition}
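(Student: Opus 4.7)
The plan is to transport the known generators of $K^K_\bullet(K, \tau_K^K)$ through the chain of isomorphisms
\[
K^K_\bullet(K,\tau_K^K) \xrightarrow{\;\mathrm{PD}\;} K_K^\bullet(K,\tau_K^K) \xrightarrow{\;(i^*)^{-1}\;} K_K^\bullet(G,\tau_K^G) \xrightarrow{\;i_*^{G/K}\;} KK_\bullet^K(G/K,(G,\tau_K^G)) \xrightarrow{\;(\mathrm{Res}_G^K)^{-1}\;} KK_\bullet^G(G/K,(G,\tau_G^G))
\]
appearing in the proof of Theorem \ref{mainthm}(\ref{ringstructure}), and to identify the image of each Meinrenken cycle $[\mathcal{C}_\mu] = (\mathcal{C}_\mu, \iota_\mu, T\mathcal{C}_\mu)$ with the correspondence $(\mathcal{D}_\mu,\xi_\mu)$ equipped with the maps $b$ and $f$. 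Since the composite chain is an isomorphism and the $[\mathcal{C}_\mu]$ generate $K^K_\bullet(K, \tau_K^K)$ by Meinrenken's result, it will suffice to show that $[\mathcal{C}_\mu]$ is sent to the class of the correspondence (\ref{generatorcorrespondence}).

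First I would recast everything in the language of geometric correspondences developed in \S \ref{sect:correspondences}: the Poincar\'e duality sends the geometric cycle $(\mathcal{C}_\mu, \iota_\mu, T\mathcal{C}_\mu) \in K^K_\bullet(K,\tau_K^K)$ to a $K$-equivariant correspondence from a point to $K$ with middle space $\mathcal{C}_\mu$, the tangent bundle $T\mathcal{C}_\mu$ serving as the twisted $\mathrm{spin}^c$ datum matching $\tau_K^K|_{\mathcal{C}_\mu}$. The Thom isomorphism $i^*$ just re-expresses the twist $\tau_K^K$ on $K$ as the pulled-back twist $\tau_K^G$ on $G$ via $m : K\times \mathfrak{p}\to G$, so the correspondence is unchanged at the level of geometry — only the target $G$-twist is updated, and the middle space is still $\mathcal{C}_\mu \subset K \subset G$ with inclusion $\iota_\mu$.

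Next, $i_*^{G/K}$ is the equivariant Thom isomorphism for the $K$-equivariantly trivial bundle $G/K \cong \mathfrak{p}\to \mathrm{pt}$; on correspondences it replaces the point by $G/K$ and modifies the twisted tangent bundle by adding the tangent bundle to the fibre. Concretely, the resulting class is represented by the $K$-equivariant correspondence with middle space $G/K \times \mathcal{C}_\mu$, left leg the projection to $G/K$, right leg $(x,c)\mapsto c \in G$, and spin datum $T(G/K) \oplus T\mathcal{C}_\mu$. Finally, the inverse of $\mathrm{Res}_G^K$ is induction from $K$ to $G$: on the geometry it sends a $K$-space $Y$ to $G \times_K Y$ with the obvious $G$-action. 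Applying induction to $G/K \times \mathcal{C}_\mu$ yields
\[
G\times_K (G/K \times \mathcal{C}_\mu) \;\cong\; G/K \times (G\times_K \mathcal{C}_\mu) \;=\; G/K \times \mathcal{D}_\mu,
\]
and after the standard equivalence of correspondences that replaces a product-with-$G/K$ middle space by its reduction (using that the left map $b$ is proper because its fibres are compact, hence the Kasparov product over $G/K$ is well-defined), one recovers exactly the correspondence $(\mathcal{D}_\mu,\xi_\mu)$ of (\ref{generatorcorrespondence}): the left leg becomes $b:\mathcal{D}_\mu \to G/K$, the right leg becomes $f:[g,c]\mapsto gcg^{-1}$ (because under induction the $K$-equivariant inclusion $\iota_\mu$ into $K\subset G$, twisted by the left $G$-translation used in the $G$-action, yields the conjugation map), and the spin datum is the induced bundle $\xi_\mu = G\times_K T\mathcal{C}_\mu$.

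The main obstacle will be the last step: verifying that the induction $\mathrm{Res}_G^K \mapsto \mathrm{Ind}_K^G$ on geometric correspondences yields the conjugation formula $f([g,c])=gcg^{-1}$ rather than $(g,c)\mapsto gc$. This comes down to being careful about which $G$-action one equips $G$ with: in $\tau_K^G$ the relevant $K$-action on $G$ is by conjugation (compatible with the adjoint action on $\mathfrak{p}$), and induction along $K\hookrightarrow G$ of a $K$-equivariant map $Y \to G_{\mathrm{conj}}$ gives the $G$-equivariant map $G\times_K Y \to G_{\mathrm{conj}}$, $[g,y]\mapsto g\cdot y\cdot g^{-1}$; once this is pinned down the remaining identifications are just Morita/associativity statements for the Kasparov product. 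Combined with surjectivity of Meinrenken's generators in $K^K_\bullet(K,\tau_K^K)$, this finishes the argument.
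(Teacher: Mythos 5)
Your overall strategy coincides with the paper's: push Meinrenken's generating cycles $(\mathcal{C}_\mu, T\mathcal{C}_\mu,\iota_\mu)$ through the chain of isomorphisms from the proof of Theorem \ref{mainthm} (\ref{ringstructure}) and identify the output with the correspondence (\ref{generatorcorrespondence}). The first three legs of your chain (Poincar\'e duality, the twist comparison $i^*$, and the Thom/inflation step producing the middle space $\mathfrak{p}\times\mathcal{C}_\mu\cong G/K\times\mathcal{C}_\mu$ over $G/K$) match the paper's steps, up to reordering. The genuine gap is in your treatment of $(\mathrm{Res}_G^K)^{-1}$. You propose to realize it as induction of correspondences, $M\mapsto G\times_K M$, applied to the middle space $G/K\times\mathcal{C}_\mu$. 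But $G\times_K(G/K\times\mathcal{C}_\mu)\cong G/K\times\mathcal{D}_\mu$ has, for either natural choice of left leg, fibres diffeomorphic to $\mathcal{D}_\mu$ or to $G/K\times\mathcal{C}_\mu$, which are noncompact since $G/K$ is; so the induced diagram is not a legal correspondence (the left leg must be proper). The ``standard equivalence of correspondences that replaces a product-with-$G/K$ middle space by its reduction'' that you invoke is not among the defining relations of the geometric model (disjoint union, bordism, vector bundle modification), and making it precise is essentially the content you would still have to supply.

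The paper avoids this entirely: the restriction map $KK^G_\bullet(G/K,(G,\tau_G^G))\to KK^K_\bullet(G/K,(G,\tau_K^G))$ is literally restriction of the group action, so its inverse is computed by exhibiting the $K$-equivariant correspondence as the restriction of an already $G$-equivariant one of the \emph{same} underlying geometry. Concretely, one first uses $(l^*)^{-1}$ and $(m^*)^{-1}$ (with $m(\zeta,k)=\exp(\zeta)k$) to change the target from $K$ to $G$, which turns the right leg into $(\zeta,k)\mapsto\exp(\zeta)f_1(k)$; then the $K$-equivariant diffeomorphism $\mathfrak{p}\times\mathcal{C}_\mu\to\mathcal{D}_\mu$, $(\zeta,k)\mapsto[\exp(\zeta)k,k]$, together with the homotopy $([g,k],t)\mapsto\exp(\zeta)k_0kk_0^{-1}\exp(-t\zeta)$, converts the right leg into $[g,c]\mapsto gcg^{-1}$. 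At that point the correspondence $(\mathcal{D}_\mu,b,f)$ visibly carries a $G$-action extending the $K$-action, and $(\mathrm{Res}_G^K)^{-1}$ simply upgrades the action without changing the space. Note also that in this route the conjugation formula arises from the interaction of $m^*$ with the diffeomorphism and the homotopy, not from an induction of a $K$-map into $G_{\mathrm{conj}}$ as you suggest (your observation about induction of maps into $G$ with the conjugation action is correct in itself, but it is not the mechanism that is needed here). With your last step replaced by this argument, the proof goes through.
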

\begin{proof}
	We shall start from the class represented by the correspondence
	\begin{eqnarray*}
		\xymatrix{&(\mathcal{C}_\mu, T\mathcal{C}_\mu) \ar[ddl]_{b_1} \ar[ddr]^{f_1}&\\
		&&\\
		\text{pt} &&K}
	\end{eqnarray*}
	in $KK^K_\text{geom}(\text{pt}, (K, \tau_K^K))$ where $f_1$ is the inclusion map and by tracing the various isomorphisms in the proof of Theorem \ref{mainthm} (\ref{ringstructure}) show that the associated class in $KK^G_\text{geom}(G/K, (G, \tau_G^G))$ can be represented by the correspondence (\ref{generatorcorrespondence}). The isomorphism
	\[ {[G/K]\otimes\cdot: KK^K_\bullet(\text{pt}, (K, \tau_K^K))\cong KK^K_\bullet(G/K, (K, \tau_K^K))}\]
	 {takes the above correspondence to the intersection product of it and $[G/K]$, which is represented by the correspondence}
	\begin{eqnarray*}
		 {\xymatrix{& (\mathfrak{p}, \textbf{1}) \ar[ddl]_{b_2} \ar[ddr]^{f_2}&\\ &&\\ G/K &&\text{pt}}}
	\end{eqnarray*}
	 {where $b_2$ is the diffeomorphism}
	\begin{align*}
		\mathfrak{p}&\to G/K\\ 
		\zeta&\mapsto \text{exp}(\zeta)K,
	\end{align*}
	 {$f_2$ the collapsing map and $\textbf{1}$ the trivial complex line bundle over $\mathfrak{p}$. The intersection product is the following correspondence} 
	\begin{eqnarray}\label{inflatedKcorrespondence}
		 {\xymatrix{&(\mathfrak{p}\times\mathcal{C}_\mu, \mathfrak{p}\times T\mathcal{C}_\mu)\ar[ddl]_{b_3} \ar[ddr]^{f_3}&\\ &&\\ G/K&& K}}
	\end{eqnarray}
	 {where} 
	\begin{align*}
		b_3: \mathfrak{p}\times\mathcal{C}_\mu&\to G/K\\
		(\zeta, k)&\mapsto \text{exp}(\zeta)K,\ \text{and}\\
		f_3: \mathfrak{p}\times \mathcal{C}_\mu&\to K\\
		(\zeta, k)&\mapsto k
	\end{align*}
	{The map $b_3$ is proper because its fibers are diffeomorphic to $\mathcal{C}_\mu$, which is compact. The Thom isomorphism }
	\[ {KK^K_{\bullet}(G/K, (K, \tau_K^K))\to KK^K_\bullet(G/K, (G, \tau_K^G))}\]
	 {takes the correspondence (\ref{inflatedKcorrespondence}) to } 
	
	\begin{eqnarray}
		\xymatrix{&(\mathfrak{p}\times\mathcal{C}_\mu, \mathfrak{p}\times T\mathcal{C}_\mu)\ar[ddl]_{b_3} \ar[ddr]^{f_4}&\\ &&\\ G/K&&  {G}}
	\end{eqnarray}
	where $f_4=i\circ f_3.$
	The $K$-equivariant diffeomorphism 
	\begin{align*}
		\mathfrak{p}\times\mathcal{C}_\mu&\to G\times_K\mathcal{C}_\mu\\
		(\zeta, k)&\mapsto [\text{exp}(\zeta)k, k]
	\end{align*}
	takes the correspondence (\ref{inflatedKcorrespondence}) to the correspondence
	\begin{eqnarray}\label{7thcorrespondence}
		\xymatrix{&(\mathcal{D}_\mu, \xi_\mu) \ar[ddl]_{b} \ar[ddr]^{ {f_5}}&\\ && \\ G/K&& G}
	\end{eqnarray}
	where, for the Cartan decomposition of any group element $g=\text{exp}(\zeta)k_0$, 
	\begin{align*}
		b: \mathcal{D}_\mu&\to G/K\\
		[g, k]&\mapsto gK,\ \text{and}\\
		 {f_5}: \mathcal{D}_\mu&\to G\\
		[g, k]&\mapsto gkk_0^{-1}=\text{exp}(\zeta)k_0kk_0^{-1}.
	\end{align*}
	The map $ {f_5}$ is $K$-equivariantly homotopy equivalent to the map 
	\begin{align*}
		f: \mathcal{D}_\mu&\to G\\
		[g, k]&\mapsto gkg^{-1}
	\end{align*}
	through the homotopy $([g, k], t)\mapsto \text{exp}(\zeta)k_0kk_0^{-1}\text{exp}(-t\zeta)$. Thus the correspondences (\ref{7thcorrespondence}) and (\ref{generatorcorrespondence}) represent the same $KK$-class in $KK^K_\text{geom}(G/K, (G, \tau_K^G))$. By \cite[Theorem 5.8]{Ka} (phrased as \cite[Proposition 3.2]{MN}), we have that the restriction map 
	\[KK^G_\bullet(G/K, (G, \tau_G^G))\to KK^K_\bullet(G/K, (G, \tau_K^G))\]
	 is an isomorphism. The correspondence (\ref{generatorcorrespondence}) with $K$-action representing a $KK$-class in the latter $KK$-group then is associated under this isomorphism to the same correspondence but equipped with $G$-action. All in all, we have shown that the geometric cycle $(\mathcal{C}_\mu, T\mathcal{C}_\mu, \iota)$ is associated to the correspondence (\ref{generatorcorrespondence}) under the isomorphism $K^K_\bullet(K, \tau_K^K)\cong KK^G_\bullet(G/K, (G, \tau_K^G))$ which is the composition of various isomorphisms in the proof of Theorem \ref{mainthm} (\ref{ringstructure}). Noting that $(\mathcal{C}_\mu, T\mathcal{C}_\mu, \iota)$ for $\mu\in \Lambda_{k-\textsf{h}^\vee}^*(K)$ represent a set of generators of $K^K_\text{geom}(K, \tau_K^K)$, we complete the proof. 
\end{proof}
We next describe the ring structure of $KK^G_\text{geom}(G/K, (G, \tau_G^G))$  in terms of the generators as described previously. Recall that from \cite{FHT} that the product
$$
[{\mathcal C}_{\mu_1}, T{\mathcal C}_{\mu_1}, \iota_1]\star [{\mathcal C}_{\mu_2}, T{\mathcal C}_{\mu_2}, \iota_2] = \sum_\mu c^\mu_{\mu_1, \mu_2} 
[{\mathcal C}_{\mu}, T{\mathcal C}_{\mu}, \iota] 
$$
where $c^\mu_{\mu_1, \mu_2}$ are the structure constants of the Verlinde algebra with respect to the generators. Therefore in the noncompact case, the product of correspondences 
yielding the ring structure in $KK^G_{geom}(G/K, (G, \tau_G^G))$ is,
 \begin{equation}
{\tiny \xymatrix{& \ar[ddl]_b ({\mathcal D}_{\mu_1} , \xi_{\mu_1})\ar[ddr]^{f} &\\
&&\\
G/K && G} \star
\xymatrix{& \ar[ddl]_b ({\mathcal D}_{\mu_2} , \xi_{\mu_2})\ar[ddr]^{f} &\\
&&\\
G/K && G} = \sum_\mu c^\mu_{\mu_1, \mu_2} 
\xymatrix{& \ar[ddl]_b ({\mathcal D}_{\mu} , \xi_{\mu})\ar[ddr]^{f} &\\
&&\\
G/K && G}}
\end{equation}

\section{The geometric description of $KK^G(\Gamma_0(\text{Cliff}(TX)), \Gamma_0( \tau_Y))$}\label{sect:correspondences}

In this section, we will give a geometric description of $KK^G_\bullet(X, (Y, \tau_Y))$ in terms of  certain correspondences. The description of 
$KK(X,Y)$ in terms of correspondences is due to  {Connes-Skandalis \cite{ConnesSkandalis84}}, which we adapt to the twisted equivariant case, clarifying and adapting \cite{EM} in our context. 
We also mention that the geometric model for twisted $K$-homology is due to Wang \cite{Wang}.  In this section we will assume that $G$ is a connected Lie group. The reason that we do this is because under this assumption, it has been proved by Phillips \cite{Phillips2} that the $G$-equivariant $K$-theory, $K_G^0(M)$ is generated by finite dimensional $G$-vector bundles, whenever $G$ acts properly cocompactly on $M$.  Examples by Phillips in \cite{Phillips1} and L\"uck-Oliver in \cite{LO} \S5, show that for an {\em arbitrary} Lie group $G$, the equivariant $K$-theory constructed from finite-dimensional $G$-vector bundles may not be a generalised cohomology theory.

As before, we assume that the $G$ action on manifolds in this section is  proper.

\subsection{Geometric model, $KK^G_{geom}(\Gamma_0(\text{Cliff}(TX)), \Gamma_0( \tau_Y))$}
A twisted $G$-equivariant correspondence $\mathcal C$ between the  $G$-spaces $X$ and $(Y, \tau_Y)$, where $\tau_Y$ is a $G$-DD-bundle over $Y$, is a diagram
 \begin{equation}
\xymatrix{& \ar[ddl]_b (M , \xi)\ar[ddr]^{f} &\\
&&\\
X && Y}
 \end{equation}
where  $M$ is a $G$-space, $\xi$  is a $G$-vector bundle on $M$ that is also a module for the $G$-algebra bundle $\text{Cliff}(TM)$. Here $b$ is a proper $G$-map, $f$ is smooth $G$-map 
that is twisted ${\text{spin}^c}$, i.e.,
\begin{equation}
f^*(\tau_Y) {\otimes} \text{Cliff}(TM) \cong {\rm End}(S)
\end{equation}
for some vector bundle $S\to M$ over $M$.

A pair of twisted $G$-equivariant correspondences
 \begin{equation}
\xymatrix{& \ar[ddl]_b (M , \xi)\ar[ddr]^{f} &\\
&&\\
X && Y} \text{and}
\xymatrix{& \ar[ddl]_{b_1} (M_1 , \xi_1)\ar[ddr]^{f_1} &\\
&&\\
X && Y}
 \end{equation}
 are said to be {\em isomorphic}, denoted $\cong$, if there is a $G$-diffeomorphism $h:M\to M_1$ such that 
 $\xi \cong h^*(\xi_1)$ as $G$-Clifford module bundles over $M$.\\

\begin{definition}
Denote by $KK^G_{geom}(\Gamma_0(\text{Cliff}(TX)), \Gamma_0( \tau_Y))$ the set of equivalence classes of twisted $G$-equivariant correspondences over $(X, Y)$, with the equivalence relation generated by the following
below.

\begin{enumerate}
\item Direct sum - disjoint union:
 \begin{equation} \label{directsum}
\xymatrix{& \ar[ddl]_b (M, \xi_1)\ar[ddr]^{f} &\\
&&\\
X && Y} \quad \bigsqcup
\xymatrix{& \ar[ddl]_b (M, \xi_2)\ar[ddr]^{f} &\\
&&\\
X && Y} \cong
\xymatrix{& \ar[ddl]_b (M_1 , \xi_1)\ar[ddr]^{f} &\\
&&\\
X && Y}\end{equation}

\item Equivariant bordism: Two twisted $G$-correspondences,

 \begin{equation}
\xymatrix{& \ar[ddl]_{b_1} (M_1 , \xi_1)\ar[ddr]^{f_1} &\\
&&\\
X && Y}
\quad\text{and}\quad
\xymatrix{& \ar[ddl]_{b_2} (M_2 , \xi_2)\ar[ddr]^{f_2} &\\
&&\\
X && Y}
 \end{equation}
are said to be {\em $G$-bordant} if there is a twisted $G$-correspondence with boundary,
 \begin{equation}
\xymatrix{& \ar[ddl]_b (M , \xi)\ar[ddr]^{f} &\\
&&\\
X && Y}
 \end{equation}
such that $M$ is a $G$-manifold with boundary $\partial M= M_1 \bigsqcup M_2$, $\xi\big|_{M_k}=\xi_k$, 
$f\big|_{M_k}=f_k$ and $g\big|_{M_k}=(-1)^{k+1}g_k$ for $k=1,2$.\\

\item Equivariant vector bundle modification:
Let $W$ be a $G$-${\text{spin}^c}$ vector bundle over $M$ with even dimensional fibres and which is also a module over $\text{Cliff}(TM)$. 
Let $Z=S(W\oplus {\bf 1})$ be the unit sphere bundle of the odd dimensional vector bundle $W\oplus {\bf 1})$, and let 
$\pi: Z\longrightarrow M$ denote the projection. Then the correspondence obtained from
 \begin{equation}
\xymatrix{& \ar[ddl]_b (M , \xi)\ar[ddr]^{f} &\\
&&\\
X && Y}
 \end{equation}
by vector bundle modification is,
 \begin{equation}
\xymatrix{& \ar[ddl]_{b\circ\pi} (Z , F \otimes \pi^*\xi)\ar[ddr]^{f\circ\pi} &\\
&&\\
X && Y}
 \end{equation}
Here $F$ is defined as follows. The vertical tangent bundle of $Z$ has a natural ${\text{spin}^c}$ structure, so let $S_v$ denote the spinor bundle 
of the vertical tangent bundle. Then let $F=S^{*,+}_v$, that is the dual of the even graded part of the ${\mathbb Z}_2$-graded bundle $S_v$.
\end{enumerate}
\end{definition}
\begin{remark}
	 {If $K$ is simply-connected, $G/K$ is $G$-equivariantly $\text{Spin}^c$ because $H_G^3(G/K, \mathbb{Z})\cong H_K^3(\text{pt}, \mathbb{Z})=0$. Thus $\Gamma_0(\text{Cliff}(TG/K))$ is Morita equivalent to $C_0(G/K)$. }
\end{remark}

\subsection{Equivalence with the analytic definition}
In this subsection, we sketch a proof that $KK^G_{geom}(\Gamma_0(\text{Cliff}(TX)), \Gamma_0( \tau_Y))$ is equivalent 
to the analytic definition.
Consider a twisted $G$-equivariant correspondence $\mathcal C$
 \begin{equation}
\xymatrix{& \ar[ddl]_b (M , \xi)\ar[ddr]^{f} &\\
&&\\
X && Y}
 \end{equation}
To a $\tau_Y$-twisted $G$-${\text{spin}^c}$ map $f:M \to Y$ as above, similar to a construction by Connes-Skandalis \cite{ConnesSkandalis84}, one can associate an element $f!: KK^G((M, {\rm Cliff}(TM)),(Y, \tau_Y))$ as
follows. The group $K_G^0(T^*M \times Y, \pi^*{\rm Cliff}(TM))$, where $\pi:T^*M\to M$ denotes the projection map, is generated by families of symbols $\sigma$ parametrised by $Y$, that are also 
 ${\rm Cliff}(TM)$-module maps. The quantization of $\sigma$ determines a family $\Psi^*(\sigma)$ of pseudodifferential operators parametrised by $Y$, and therefore an element 
$\Psi^*([\sigma]) \in KK^G(\Gamma_0({\rm Cliff}(TM)),\Gamma_0(\tau_Y))$, since $f^*(\tau_Y) {\otimes} \text{Cliff}(TM) \cong {\rm End}(S)$.
 Next we will argue that such a map $f$ determines a family of symbols 
$[\sigma_f] \in K_G^0(T^*M \times Y, \pi^*{\rm Cliff}(TM))$, and then define 
\begin{equation}
f! = \Psi^*([\sigma_f]) \in KK^G(\Gamma_0({\rm Cliff}(TM)), \Gamma_0(\tau_Y)).
\end{equation}

We now sketch the construction of $\sigma_f$. Since $f$ is twisted $G$-${\text{spin}^c}$, then there is a $G$-vector bundle $S$ over $M$ such that there is an equivariant isomorphism of algebra bundles over $M$,
\begin{equation}
f^*(\tau_Y) {\otimes} \text{Cliff}(TM) \cong {\rm End}(S).
\end{equation}
Let $\Omega_f$ denote the $G$-open subset of $M \times Y$ defined by $\Omega_f=\{(x,y) \in M\times Y| d(f(x), y)<1\}$. It is a tubular neighbourhood of the graph of $f$,
where we assume that the injectivity radius of $Y$ is equal to $1$. Let $pr_M: M\times Y \to M$ denote the projection, which is a $G$-map. Finally, define 
$\sigma_f (x,\xi,y)= M^{1/2}(x,y) ||\xi||^{-1}  c(\xi,0) + (1-M)^{1/2}(x,y)||y-f(x)||^{-1} c(0,y-f(x)) \in {\frak L}(S_x)$, for all $(x,y) \in \Omega_f,\, \xi \in T^*_xM\setminus \{0\}$, and
where $M(x,y)$ is a smooth $G$-function on $M\times Y$ such that $M(x,y)=1$ if $d(x,y)<\epsilon<1$ and $M(x,y)=0$  if $d(x,y)>1-\epsilon$. Then $(\Omega_f, p_M^*(S), \sigma)$ 
defines a Clifford $G$-symbol in the sense of \cite{ConnesSkandalis84}, such that $[\sigma_f] \in K_G^0(T^*M \times Y, \pi^*{\rm Cliff}(TM))$, completing the desired construction.

Given the $G$-correspondence as above, note that $b^*: \Gamma_0( {\rm Cliff}(TX)) \to \Gamma_0({\rm Cliff}(TM))$ is a homomorphism since $b$ is proper, which therefore
defines an element \begin{equation}b^* \in KK( \Gamma_0( {\rm Cliff}(TX)), \Gamma_0({\rm Cliff}(TM))).\end{equation}
The $G$-vector bundle $\xi \to M$ that is also a ${\rm Cliff}(TM)$-module, determines an element $[\xi] \in K_0^G(\Gamma_0({\rm Cliff}(TM)))$, as well as an element $[[\xi]] \in KK^G(\Gamma_0( {\rm Cliff}(TM)), \Gamma_0( {\rm Cliff}(TM)))$.
We can form 
\begin{equation}
[[\xi]]\otimes_{\Gamma_0( {\rm Cliff}(TM))} f! \in KK^G(\Gamma_0( {\rm Cliff}(TM)), \Gamma_0( \tau_Y)).
\end{equation}
Therefore
\begin{equation}
b^*([[\xi]]\otimes_{\Gamma_0( {\rm Cliff}(TM))} f!) \in KK^G(\Gamma_0(  {\rm Cliff}(TX)), \Gamma_0( \tau_Y)).
\end{equation}
 This completes the construction of the map,
\begin{equation}
\Phi: KK^G_{geom}(\Gamma_0(  {\rm Cliff}(TX)), \Gamma_0( \tau_Y)) \longrightarrow KK^G(\Gamma_0(  {\rm Cliff}(TX)), \Gamma_0( \tau_Y)).
\end{equation} 

By a standard method of proof, cf. \cite{EM,Wang, BMRS09}, one shows that this is an isomorphism of groups. We only give a brief outline of this isomorphism, as it is not central to our paper.

The surjectivity of $\Phi$ follows from the equivariant version of Proposition 5.3 in \cite{BMRS09}. 

Next the $C^*$-algebras 
$\Gamma_0(  {\rm Cliff}(TX)$ and $C_0(X)$ are $G$-Poincar\'e dual, therefore by \cite{EEK}
$$
PD: KK^G(\Gamma_0(  {\rm Cliff}(TX)), \Gamma_0( \tau_Y)) \cong KK^G(\CC, C_0(X) \otimes\Gamma_0( \tau_Y))
$$
and 
$$
PD_{geom}: KK^G_{geom}(\Gamma_0(  {\rm Cliff}(TX)), \Gamma_0( \tau_Y)) \cong KK^G_{geom}(\CC, C_0(X) \otimes\Gamma_0( \tau_Y)).
$$
On the other hand, an equivariant version of \cite{Wang} says that 
$$
\Phi': KK^G_{geom}(\CC, C_0(X) \otimes\Gamma_0( \tau_Y)) \cong KK^G(\CC, C_0(X) \otimes\Gamma_0( \tau_Y)),
$$
Let $\alpha \in KK^G_{geom}(\Gamma_0(  {\rm Cliff}(TX)), \Gamma_0( \tau_Y))$ be such that $\Phi(\alpha)=0$. Then 
$PD(\Phi(\alpha))=0$ so that $\Phi'^{-1}PD(\Phi(\alpha))=0$. But this is equal to $PD_{geom}(\alpha)=0$ which implies that
$\alpha=0$ proving injectivity of $\Phi$ and completing the sketch of proof.

\begin{appendix}
\section{$K$-theory of reduced group $C^*$-algebras}\label{kcstar}
 {In this appendix, we let $G$ be a real reductive linear Lie group. We would like to give a brief review of the results in \cite{W} and \cite{CCH} on the $K$-theory of $C_r^*G$, which reflects the tempered representations of $G$. The classification of the latter by parabolic induction can be found in \cite{K}. We also give an explicit description of the map $K_K^\bullet(K, \tau_K^K)\to K_\bullet(\Gamma_0(\tau_G^G)\rtimes G)$ in terms of weights when $G$ is a complex semisimple linear Lie group, based on the commutative diagram and the description of the ordinary Connes-Kasparov map in terms of weights.}

 {Given a Cartan decomposition $\mathfrak{g}=\mathfrak{k}\oplus\mathfrak{p}$ and a Cartan subalgebra $\mathfrak{h}$ of $\mathfrak{g}$, we define the following
\begin{enumerate}
	\item $\mathfrak{a}:=\mathfrak{h}\cap\mathfrak{p}$, 
	\item $A:=$the analytic subgroup of $G$ with Lie algebra $\mathfrak{a}$. 
	\item $\mathfrak{m}:=$the orthogonal complement of $\mathfrak{a}$ in $Z_\mathfrak{g}(\mathfrak{a})$. 
	\item $M_0:=$the analytic subgroup of $G$ with Lie algebra $\mathfrak{m}$, and 
	\item $M=Z_K(\mathfrak{a})M_0$. 
\end{enumerate}}
 {Using the abelian Lie subalgebra $\mathfrak{a}$ one can define a root space decomposition of $\mathfrak{g}$: 
\[\mathfrak{g}=\mathfrak{a}\oplus\bigoplus_{\alpha\in\Delta_{\mathfrak{a}^*}}\mathfrak{g}_\alpha.\]
With a choice of positive roots, let 
\[\mathfrak{n}:=\bigoplus_{\alpha\in\Delta_{\mathfrak{a}^*}^+}\mathfrak{g}_\alpha\]
and $N$ the analytic subgroup of $G$ with Lie algebra $\mathfrak{n}$. }
 {\begin{definition}
	The cuspidal parabolic subgroup of $G$ associated to $\mathfrak{h}$ and $\Delta_{\mathfrak{a}^*}^+$ is defined to be $P:=MAN$.
\end{definition}
Let $T$ be a maximal torus of $K$. Note that $\mathfrak{t}\cap \mathfrak{m}$ is a Cartan subalgebra of $\mathfrak{m}$ and so $M$ admits discrete series or limits of discrete series. 
\begin{theorem}[\cite{K}, Theorem 14.76] 
Every irreducible tempered representation $\pi$ of $G$ can be obtained by induction from a cuspidal parabolic subgroup $P=MAN$ as $\pi_{\sigma, \nu}:=\text{Ind}_P^G(\sigma\otimes e^{i\nu}\otimes 1)$ where $\sigma\in\widehat{M}_{\text{ds}}$ or a limit of discrete series, and $\nu\in\mathfrak{a}^*$. 
\end{theorem}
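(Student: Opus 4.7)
The plan is to follow Harish-Chandra's classical strategy via the Plancherel theorem, and I would organize the argument in two directions (easy and hard) plus a treatment of the walls.

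First, I would establish the straightforward direction: any representation $\pi_{\sigma,\nu} = \operatorname{Ind}_P^G(\sigma \otimes e^{i\nu} \otimes 1)$ with $\sigma$ a discrete series (or limit thereof) of $M$ and $\nu \in \mathfrak{a}^*$ is tempered. This follows from Harish-Chandra's asymptotic criterion on matrix coefficients: since $\sigma$ has matrix coefficients in $L^{2+\epsilon}(M)$ for every $\epsilon > 0$ and $e^{i\nu}$ is unitary on $A$, the leading exponents of $\pi_{\sigma,\nu}$ along any minimal parabolic lie in the closed negative Weyl chamber, which is precisely the tempered cone. Induction in stages reduces the check to the case of a minimal parabolic.

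Second, for the converse, I would invoke Harish-Chandra's Plancherel formula, which decomposes $L^2(G)$ as a direct integral, over $G$-conjugacy classes of cuspidal parabolics $P = MAN$ and over $(\sigma,\nu) \in \widehat{M}_{\mathrm{ds}} \times \mathfrak{a}^*$, of the standard tempered representations $\pi_{\sigma,\nu}$. Since the Plancherel measure is supported precisely on the tempered dual, every irreducible tempered $\pi$ is weakly contained in such a decomposition. To upgrade weak containment to actual equivalence with one of the $\pi_{\sigma,\nu}$, I would apply Casselman's subrepresentation theorem to the Harish-Chandra module of $\pi$: the theory of leading exponents identifies a unique conjugacy class of cuspidal parabolics $P$ governing the asymptotics of $\pi$, and the corresponding Jacquet module contains a discrete-series-like quotient on $M$ twisted by $e^{i\nu}$.

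The main obstacle is the appearance of limits of discrete series at singular parameters. When $\nu$ is regular, $\pi_{\sigma,\nu}$ is irreducible and the identification with $\pi$ is clean. However, for $\nu$ lying on a Weyl wall, the induced representation can reduce via Knapp--Stein intertwining operators into irreducible tempered constituents which, when viewed as inductions from a larger cuspidal parabolic $P'=M'A'N'$ with $A' \subsetneq A$, correspond not to honest discrete series of $M'$ but to \emph{limits} of discrete series. Handling this requires the Knapp--Zuckerman $R$-group analysis to describe the reducibility combinatorially, together with Zuckerman's translation principle and Vogan's classification of limits of discrete series to identify each irreducible constituent with standard inducing data $(\sigma',\nu')$ in the enlarged sense of the theorem. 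Combining the regular and singular cases then yields the stated exhaustive parametrization.
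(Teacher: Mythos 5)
This is a quoted background result — the paper offers no proof, only the citation to Knapp's book (Theorem 14.76), so there is nothing internal to compare your argument against; I can only assess your sketch on its own terms. In outline it is a faithful reconstruction of the classical argument, and it correctly isolates the three genuine ingredients: (i) temperedness of $\mathrm{Ind}_P^G(\sigma\otimes e^{i\nu}\otimes 1)$ via Harish-Chandra's asymptotic criterion, (ii) an exhaustion statement, and (iii) the Knapp--Zuckerman $R$-group/limits-of-discrete-series analysis needed to turn ``constituent of an induced representation'' into ``equal to an irreducible induced representation,'' which is what the statement as quoted actually asserts. Your description of the wall phenomenon (constituents of a reducible $\pi_{\sigma,\nu}$ being realized as irreducible inductions of limits of discrete series from a parabolic with smaller $A'$) is also correct in direction.

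The one step I would push back on is the exhaustion via the Plancherel formula. Membership of $\pi$ in the support of the Plancherel measure gives only weak containment in the direct integral, and weak containment does not by itself place $\pi$ as a summand of any individual $\pi_{\sigma,\nu}$; one needs either a closedness argument for the set of constituents of the standard representations in $\widehat{G}$, or one bypasses Plancherel entirely. Knapp's actual route (and the one I would recommend making primary rather than a patch) is the second: the Langlands classification realizes $\pi$ as the Langlands quotient of $\mathrm{Ind}_P^G(\sigma\otimes e^{\nu}\otimes 1)$ with $\sigma$ tempered on $M$, temperedness of $\pi$ forces $\mathrm{Re}\,\nu=0$ via the leading-exponent estimates, and induction in stages plus descent on $M$ reduces to discrete series or limits on a cuspidal parabolic. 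Your invocation of Casselman's subrepresentation theorem and leading exponents is essentially this argument, so the content is there; but as written the Plancherel step carries no logical weight and the Casselman step is doing all the work. With that reorganization the sketch matches the standard proof.
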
}
 {\begin{definition}
	Let $W:=N_K(\mathfrak{a})/Z_K(\mathfrak{a})$. 
\end{definition}}
 {Since $N_K(\mathfrak{a})<N_K(M)$, $W$ also acts on $M$ by conjugation and thus also on $\widehat{M}_\text{ds}$. Let $W_\sigma:=\{w\in W| w\sigma=\sigma\}$. Let the unitary intertwining operator
\[A_P(w, \sigma, \nu): \text{Ind}_P^G(\sigma\otimes e^{i\nu}\otimes 1)\to \text{Ind}_P^G(w\sigma\otimes e^{iw\nu}\otimes 1)\]
be defined as in \cite[Theorem 7.22]{K}. 
\begin{definition}
	\[W'_\sigma:=\{w\in W_\sigma| A_P(w, \sigma, 0)\in\mathbb{C}I\}\]
\end{definition}}
 {One can write $W_\sigma=W'_\sigma\rtimes R_\sigma$ for some elementary abelian 2-subgroup $R_\sigma$. 
\begin{theorem}\label{reducedcstar}
Let $G$ be a real reductive linear Lie group.
\begin{enumerate}
	\item{\cite[Proposition 6.7]{CCH}}
	Let $\mathcal{E}_\sigma\to\mathfrak{a}^*$ be the $G$-Hilbert space bundle whose fiber at $\nu$ is $\text{Ind}_P^G(\sigma\otimes e^{i\nu}\otimes 1)$. Then we have the isomorphism
	\[C_r^*G\cong \bigoplus_{[P]}\bigoplus_{[\sigma]\in \widehat{M}_\text{ds}/W}\Gamma_0(\mathcal{K}(\mathcal{E}_\sigma))^{W_\sigma}\]
	where the first sum is taken over conjugacy classes of cuspidal parabolic subgroups $P$ and the second over representatives of $W$-orbits of discrete series representations. The isomorphism is realized by the map
	\[f\mapsto \int_G f(g)\pi_{\sigma, \nu}(g)dg\text{ at }\nu.\]
	\item{\cite{W}}\label{wassermann}The above $C^*$-algebras are Morita equivalent to 
	\[\bigoplus_{[P]}\bigoplus_{[\sigma]\in\widehat{M}_\text{ds}/W}C_0(\mathfrak{a}^*/W'_\sigma)\rtimes R_\sigma.\]
	We have that
	\[K_{\text{dim }G/K}(C_r^*G)\cong\bigoplus_{[P]}\bigoplus_{\substack{[\sigma]\in\widehat{M}_\text{ds}/W\\ W'_\sigma=1}}\mathbb{Z}\]
	and $K_{\text{dim }G/K+1}(C_r^*(G))=0$. Moreover, the Connes-Kasparov Conjecture is true, i.e. the Dirac induction map 
	\[R(K)\to K_\bullet(C_r^*G)\]
	is an isomorphism. 
\end{enumerate}
\end{theorem}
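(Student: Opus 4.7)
The plan is to treat the two parts separately, invoking the Plancherel decomposition of Harish-Chandra for part (1) and Wassermann's $R$-group analysis for part (2), and to finish with the comparison to Dirac induction on the compact side.

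For part (1), I would start from the operator-valued Fourier transform $f \mapsto \int_G f(g)\pi_{\sigma,\nu}(g)\,dg$. Using the classification theorem (quoted above from \cite{K}), every irreducible tempered representation of $G$ is of the form $\pi_{\sigma,\nu}$ for some conjugacy class of cuspidal parabolic $P=MAN$, some discrete-series or limit-of-discrete-series $\sigma \in \widehat{M}_{\mathrm{ds}}$, and some $\nu\in\mathfrak{a}^*$. Harish-Chandra's Plancherel theorem then expresses $C^*_r G$ as the norm closure of the image of this transform; the image lands in the algebra of continuous sections of the field of compact operators on the Hilbert-bundles $\mathcal{E}_\sigma\to \mathfrak{a}^*$, and two parameters $(\sigma,\nu)$ and $(w\sigma,w\nu)$ produce unitarily equivalent representations via the intertwiners $A_P(w,\sigma,\nu)$. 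The key step is to show that these intertwiners are continuous in $\nu$ and satisfy a cocycle identity so that the image is exactly the $W_\sigma$-invariant sections, giving the decomposition.

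For part (2), I would use the $R$-group decomposition $W_\sigma = W'_\sigma \rtimes R_\sigma$. On $W'_\sigma$ the intertwiners $A_P(w,\sigma,\nu)$ are scalar (by definition of $W'_\sigma$, using continuity in $\nu$), so they define a projective trivialization of the Hilbert-bundle $\mathcal{E}_\sigma$ over $\mathfrak{a}^*/W'_\sigma$, and we obtain a Morita equivalence
\[
\Gamma_0(\mathcal{K}(\mathcal{E}_\sigma))^{W_\sigma} \sim_{\mathrm{Morita}} C_0(\mathfrak{a}^*/W'_\sigma) \rtimes R_\sigma.
\]
Next, I would compute the $K$-theory of $C_0(\mathfrak{a}^*/W'_\sigma) \rtimes R_\sigma$. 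Since $\mathfrak{a}^*/W'_\sigma$ is contractible and $R_\sigma$ is an elementary abelian $2$-group acting via reflections, a Mayer-Vietoris / fixed-point argument (or the Green-Julg theorem combined with equivariant Bott periodicity) shows that this $K$-theory is concentrated in the degree $\dim(\mathfrak{a}^*)^{R_\sigma} \equiv \dim G/K \pmod 2$ and is equal to $\mathbb{Z}$ precisely when $W'_\sigma = 1$ and vanishes otherwise, which produces the displayed formula.

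Finally, to establish the Connes-Kasparov isomorphism, I would produce generators on each side and match them. Limits of discrete series of $M$ with $W'_\sigma = 1$ are in bijection with a natural basis on the representation-ring side via Dirac induction (in the equal-rank strata of the various Levi subgroups $M$), and one verifies naturality of Dirac induction with respect to parabolic induction. The hardest part, I expect, will be the careful analysis of the $R$-group and the verification that the intertwiners $A_P(w,\sigma,0)$ are indeed scalar on $W'_\sigma$ and act nontrivially on the orthogonal complement $R_\sigma$ — this is where the Knapp-Stein theory and Harish-Chandra's Plancherel density enter, and it is the technical heart of \cite{W,CCH} that makes the combinatorial accounting of generators match the compact-side count.
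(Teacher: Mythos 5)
The paper does not prove this theorem: it is stated as a review of results quoted directly from Clare--Crisp--Higson \cite{CCH} (part (1)) and Wassermann \cite{W} (part (2)), so there is no in-paper argument to compare yours against. Judged on its own, your sketch is a faithful reconstruction of the strategy of those references: operator-valued Fourier transform plus the Langlands/Knapp classification of tempered representations and the Knapp--Stein intertwiners for (1); the $R$-group decomposition $W_\sigma=W'_\sigma\rtimes R_\sigma$, the resulting Morita equivalence, and a component-by-component $K$-theory computation for (2); and a matching of generators with $R(K)$ via compatibility of Dirac induction with parabolic induction. You also correctly identify that the analytic content (Plancherel density, continuity and the cocycle identity for $A_P(w,\sigma,\nu)$, scalarity on $W'_\sigma$) is the actual substance of \cite{W,CCH}, so what you have is an outline of where the work lives rather than a proof.

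One step as written is misleading and would fail if taken literally: the parenthetical claim that ``the Green--Julg theorem combined with equivariant Bott periodicity'' yields $\mathbb{Z}$ when $W'_\sigma=1$. Naive equivariant Bott periodicity for $C_0(\mathfrak{a}^*)\rtimes R_\sigma$ would produce $R(R_\sigma)\cong\mathbb{Z}^{|R_\sigma|}$, not $\mathbb{Z}$. The point is that $R_\sigma$ acts on $\mathfrak{a}^*$ by reflections, so the action does not preserve an equivariant $\mathrm{Spin}^c$ structure and the Thom class is twisted by sign representations; the correct computation is the stratification/Mayer--Vietoris one you mention first (e.g.\ for $\mathbb{Z}/2$ acting on $\mathbb{R}$ by $x\mapsto -x$, the six-term sequence for $0\to C_0(\mathbb{R}\setminus\{0\})\rtimes\mathbb{Z}/2\to C_0(\mathbb{R})\rtimes\mathbb{Z}/2\to C^*(\mathbb{Z}/2)\to 0$ gives $K_0=\mathbb{Z}$, $K_1=0$, concentrated in degree $\dim(\mathfrak{a}^*)^{R_\sigma}=0$ rather than $\dim\mathfrak{a}^*$). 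You should either drop the Bott-periodicity alternative or make the sign-twist explicit. Likewise, the vanishing for $W'_\sigma\neq 1$ needs the observation that $\mathfrak{a}^*/W'_\sigma$ is a closed cone whose one-point compactification is contractible, not merely that it is contractible.
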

The Connes-Kasparov conjecture asserts that the Dirac induction}
 {\begin{align*}
	R(K)&\to K_\bullet(C_r^*G)\\
	[V_\mu]&\mapsto \text{Ind}(\slashed{\partial}_{G/K}\otimes V_\mu), 
\end{align*}
with $\slashed{\partial}_{G/K}\otimes V_\mu$ acting on the space of sections $L^2(G/K, G\times_K(S\otimes V_\mu))$, {is an isomorphism}. Here $S$ is a $K$-representation induced by the homomorphism $K\to \text{Spin}(\mathfrak{p})$ which is a lifting of the adjoint representation $K\to SO(\mathfrak{p})$, followed by the the spin representation $\text{Spin}(\mathfrak{p})\to \text{End}(S)$. The Connes-Kasparov conjecture is proved in \cite{W} in the case of connected real reductive linear Lie groups. }
\begin{example}[\cite{PP}]  {When $G$ is a complex connected semisimple Lie group, there is only one conjugacy class of cuspidal parabolic subgroups, and we can take $M$ to be a maximal torus of $K$, $\mathfrak{a}=i\mathfrak{m}$, and $W$ the Weyl group of $K$. By \cite[Proposition 4.1]{PP}, $C_r^*G$ is Morita equivalent to $C_0(\widehat{M}\times\mathfrak{a}^*/W)$ which indeed is Morita equivalent to 
\[\bigoplus_{[\sigma]\in\widehat{M}/W}C_0(\mathfrak{a}^*/W_\sigma)\]
by Theorem \ref{reducedcstar} (\ref{wassermann}) (here $R_\sigma=1$ for all $\sigma$). The summand $C_0(\mathfrak{a}^*/W_\sigma)$ contributes a copy of $\mathbb{Z}$ to $K_\bullet(C_r^*G)$ if and only if $W_\sigma=1$, i.e. $\sigma$ is a regular weight. By fixing a set of dominant weights $\Lambda^*_+(K)$ in $\widehat{M}$ and picking from $\Lambda_+^*(K)$ representatives of the $W$-orbits of $\widehat{M}$, we see that those $\sigma$ is of the form $\sigma=\mu+\rho$ for $\mu$ a dominant weight and $\rho$ the half sum of the positive roots of $K$. We may view $K_\bullet(C_r^*G)$ as the free abelian group generated by regular dominant weights. By \cite[Section 5]{PP}, one can decompose $L^2(G/K, G\times_K(S\otimes V_\mu))\cong L^2(G, G\times(S\otimes V_\mu))^K$ as 
\[\int_{\widehat{M}\times\mathfrak{a}^*/W}\mathcal{H}_{\sigma, \nu}\otimes(\mathcal{H}_{\sigma, \nu}^*\otimes S\otimes V_\mu)^K d(\sigma, \nu)\]
viewed as a Hilbert space bundle over $\widehat{M}\times\mathfrak{a}^*/W$, where $\mathcal{H}_{\sigma, \nu}$ is the underlying Hilbert space for $\pi_{\sigma, \nu}$. Then $\slashed{\partial}_{G/K}\otimes V_\mu$ acts as $\text{Id}\otimes\varphi(\sigma, \nu, \mu)$ for some $\varphi(\sigma, \nu, \mu)\in\text{End}((\mathcal{H}_{\sigma, \nu}\otimes S\otimes V_\mu)^K)$. We then have that}
 {\begin{enumerate}
	\item If $\mu+\rho-\sigma$ is in the positive Weyl chamber and not 0, then $\varphi(\sigma, \nu, \mu)$ is invertible for all $\nu\in\mathfrak{a}^*$.
	\item If $\mu+\rho-\sigma$ is not in the positive Weyl chamber, then $(\mathcal{H}_{\sigma, \nu}\otimes S\otimes V_\mu)^K=0$. 
	\item If $\mu+\rho=\sigma$ then $\varphi(\sigma, \nu, \mu)$ acts by Clifford multiplication by $\nu$. 
\end{enumerate} 
Hence $\text{Ind}(\slashed{\partial}_{G/K}\otimes V_\mu)$ can be thought of as the complex of vector bundles over $\{\mu+\rho\}\times\mathfrak{a}^*$
\[0\longrightarrow(\mathcal{H}^*_{\mu+\rho, \nu}\otimes S\otimes V_\mu)^K\stackrel{\text{Cl}(\nu)}{\longrightarrow}(\mathcal{H}^*_{\mu+\rho, \nu}\otimes S\otimes V_\mu)^K\longrightarrow 0\]
whose $K$-theory class is the Bott element $\beta_{\mu+\rho}$ of the summand $K_\bullet(C_0(\mathfrak{a}^*/W_{\mu+\rho}))\cong K_\bullet(C_0(\mathfrak{a}^*))$ of $K_\bullet(C_r^*G)$. In short, the Dirac induction map is given by $\rho$-shift, i.e. 
\[[V_\mu]\mapsto \beta_{\mu+\rho}.\]}

 {From the commutative diagram (\ref{commdiag2}), we can see that $K_\bullet(\Gamma_0(\tau_G^G)\rtimes G)$ is isomorphic to the free abelian group generated by the set of regular level $k$ weights $\Lambda^*_{k, \text{reg}}(K)$. The map 
\[R^{k-\textsf{h}^\vee}(K)\cong K_K^\bullet(K, \tau_K^K)\to K_\bullet(\Gamma_0(\tau_G^G)\rtimes G)\]
then sends the class of positive energy representation $W_\mu$ with the lowest weight $\mu\in \Lambda^*_{k-\textsf{h}^\vee}(K)$ to the generator of $K_\bullet(\Gamma_0(\tau_G^G)\rtimes G)$ labelled by $\mu+\rho\in \Lambda_{k, \text{reg}}^*(K)$.}
\end{example}

 {The Freed-Hopkins-Teleman Theorem, Theorem \ref{reducedcstar} and the commutative diagram (\ref{commdiag2}) inspire the following conjecture.
\begin{conjecture}\label{loopinduction}
	\begin{enumerate}
		\item The $K$-group $K_\bullet(\Gamma_0(\tau_G^G)\rtimes G)$ reflects the tempered, positive energy representations of $LG$ in a suitable sense.
		\item The map $q: K_K^\bullet(K, \tau_K^K)\to K_\bullet(\Gamma_0(\tau_G^G)\rtimes G)$ is given by the Dirac induction $[W_\mu]\mapsto \text{Ind}(\slashed{\partial}_{LG/K}\otimes V_\mu)$ for $\mu\in \Lambda^*_{k-\textsf{h}^\vee}(K)$.
	\end{enumerate}
\end{conjecture}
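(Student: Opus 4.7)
The plan is to combine the commutative diagram (\ref{commdiag2}) with the parabolic--induction description of $C_r^*G$ in Theorem \ref{reducedcstar} and the Freed--Hopkins--Teleman theorem. Since $q$ fits into that diagram and $\iota_!^K$ is surjective, it suffices to evaluate $q$ on the geometric generators of $K^\bullet_K(K,\tau_K^K)$ provided by Meinrenken \cite{M1} and then match the result with a candidate loop--group Dirac induction on the right-hand side.

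For part (1), the first step is to establish a ``twisted Connes--Kasparov'' decomposition of $\Gamma_0(\tau_G^G)\rtimes G$ that parallels the decomposition of $C_r^*G$ in Theorem \ref{reducedcstar}(1). The restriction of $\tau_G^G$ to a cuspidal Levi $M$ yields a twist $\tau_M^M$ and hence a level--$k$ central extension $\widehat{M}$, so the tempered summand indexed by $[P]$ should decompose, after applying Morita equivalence and the $K$-theoretic version of Theorem \ref{reducedcstar}(2), into pieces indexed by $(P,\sigma,\nu)$ with $\sigma$ a regular dominant weight of $\widehat{M}$ at the shifted level. The identification of this data with \emph{tempered} positive energy representations of $\widetilde{LG}$ is then to be obtained via holomorphic/parabolic induction in the sense of Toledano--Laredo and Kac--Peterson, which realises the irreducible positive energy representation of $\widetilde{LG}$ induced from $\sigma$ twisted by the character $e^{i\nu}$.

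For part (2), one would compute $q$ explicitly on the geometric generators $[\mathcal{C}_\mu,T\mathcal{C}_\mu,\iota]$ of $K^K_\bullet(K,\tau_K^K)$. Tracing these cycles through the chain of isomorphisms in the proof of Theorem \ref{mainthm}(\ref{ringstructure}) and then through the Baum--Connes map yields, by \S\ref{sect:generators}, the $KK$-class of the correspondence $(\mathcal{D}_\mu,\xi_\mu)$; one would then verify, in the complex semisimple case using the Plancherel decomposition and the formula $[V_\mu]\mapsto \beta_{\mu+\rho}$ displayed above, that this class coincides with the proposed $\mathrm{Ind}(\slashed{\partial}_{LG/K}\otimes V_\mu)$, where $\slashed{\partial}_{LG/K}$ is defined by analogy with the Kostant/Landweber cubic Dirac operator and the induction is carried out on the positive energy representation bundle over $LG/K$. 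The general real reductive case should follow by reducing along a cuspidal parabolic to $\slashed{\partial}_{LM/T}$ on the Levi and then parabolically inducing.

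The principal obstacle is analytic: $LG/K$ is infinite--dimensional and not locally compact, so $\slashed{\partial}_{LG/K}\otimes V_\mu$ cannot be treated as an ordinary elliptic operator, and one must supply a rigorous substitute --- for example, a Dirac induction functor for $\widetilde{LG}$ in the spirit of Freed--Hopkins--Teleman's \emph{Loop groups and twisted K-theory} series, extended from compact $K$ to noncompact $G$ by coupling the loop--group Kostant cubic Dirac operator to the Dirac element for $G/K$. Showing that this formal index represents the same class as the geometric correspondence $(\mathcal{D}_\mu,\xi_\mu)$, and simultaneously ruling out non--tempered positive energy representations on the right side (so that ``tempered'' in (1) is correct), is the crux of the conjecture and is expected to require analytic input well beyond the $KK$--theoretic machinery of the present paper.
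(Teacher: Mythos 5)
The statement you are addressing is labelled a \emph{conjecture} in the paper (Conjecture \ref{loopinduction}), and the paper supplies no proof of it: it is motivated only by the Freed--Hopkins--Teleman theorem, by Theorem \ref{reducedcstar}, by Example A.6 (where the map $q$ is computed explicitly for complex semisimple $G$ as the $\rho$-shift $[W_\mu]\mapsto\beta_{\mu+\rho}$), and by the commutative diagram (\ref{commdiag2}). Your proposal is therefore not being measured against an existing argument, and, as you yourself concede in your final paragraph, it is not a proof either. The two essential ingredients remain unsupplied: (i) a rigorous definition of $\slashed{\partial}_{LG/K}$ and of a Dirac induction functor for $\widetilde{LG}$ when $G$ is noncompact --- without this, part (2) is not even a well-posed assertion, so there is nothing for your ``verification on generators'' to be checked against; and (ii) a precise meaning for ``reflects the tempered, positive energy representations of $LG$'' in part (1), together with a classification of such representations compatible with the parabolic-induction picture. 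Your proposed route for (1) --- decomposing $\Gamma_0(\tau_G^G)\rtimes G$ along cuspidal parabolics in parallel with Theorem \ref{reducedcstar}(1) and matching summands with holomorphically induced positive energy representations --- also presupposes a twisted analogue of the Clare--Crisp--Higson/Wassermann decomposition that is not established in the paper and would itself be a substantial theorem.

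What your proposal does correctly is reassemble the evidence the authors themselves cite: the surjectivity of $\iota_!^K$, the computation of $q$ on the Meinrenken generators via the correspondences $(\mathcal{D}_\mu,\xi_\mu)$ of \S\ref{sect:generators}, and the weight-theoretic description of the Connes--Kasparov map in the complex case. This is a sensible research program and is consistent with how the conjecture is framed, but it should be presented as such, not as a proof: every step beyond the bookkeeping already done in the paper (in particular the construction of the loop-group Dirac operator and the temperedness statement) is precisely the open content of the conjecture.
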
}

\section{Quantization of $q$-Hamiltonian $G$-spaces}

$q$-Hamiltonian manifolds, which are defined in \cite{AMM} and equipped with Lie group-valued moment maps as opposed to ordinary moment maps which take values in dual of Lie algebras, can be thought of as the `exponentiated' version of ordinary Hamiltonian manifolds, and gives rise to finite dimensional construction of moduli spaces of Riemann surfaces. In \cite{M1, M2}, quantization of 
$q$-Hamiltonian manifolds with Hamiltonian action by compact Lie groups is defined as pushforward of twisted $K$-theory fundamental class of the manifold along the group-valued moment map. In this appendix, we will construct $q$-Hamiltonian $G$-manifolds, where $G$ is noncompact acting properly and cocompactly,  from $q$-Hamiltonian $K$-manifolds, where $K$ is a maximal compact subgroup of $G$, by means of induction and cross-section, following \cite{H, AMM} and define their quantization in terms of twisted $G$-equivariant $KK$-theory, which we show is a finite dimensional Frobenius ring, hence associated to a TQFT.

\begin{definition}
	We say $g\in G$ is strongly stable if its centralizer $G_g$ is compact. Let $\mathcal{D}$ be the set of strongly stable elements in $G$.
\end{definition}
\begin{remark}
	In \cite{W}, strongly stable elements refer to those in the Lie algebra $\mathfrak{g}$ with compact centralizers. 
\end{remark}
By adapting \cite[Corollary 2.4, Propositions 2.6 and 2.8]{W}, we have the following.
\begin{proposition}
	\begin{enumerate}
		\item $\mathcal{D}$ is an open set of $G$. 
		\item $\mathcal{D}$ is non-empty if and only if $\text{rk }G=\text{rk }K$.
		\item Let $\mathcal{D}$ be non-empty and $T$ a maximal torus of $K$ (so $T$ is a compact Cartan subgroup of $G$). If $\mathcal{F}:=\{\xi\in\mathfrak{t}| \alpha(\xi)\neq 2k\pi i\text{ for }k\in\mathbb{Z}\text{ and }\alpha\text{ a noncompact root}\}$, then $\mathcal{D}=\text{Ad}_g\text{exp}(\mathcal{F})$. 
	\end{enumerate}
\end{proposition}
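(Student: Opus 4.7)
The plan is to adapt each of Wassermann's Lie-algebra statements cited in \cite{W} to the group setting, passing through the Jordan decomposition and the exponential map on a compact Cartan subgroup. The key translation is that for $g \in G$, the centralizer $G_g$ is a closed reductive subgroup with Lie algebra $\mathfrak{g}_g = \ker(\text{Ad}(g) - I)$, and $G_g$ is compact if and only if $\mathfrak{g}_g$ is of compact type; this is the group analogue of Wassermann's Lie-algebra notion of strong stability.

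For (1), I would first observe that any strongly stable $g$ is semisimple: writing the Jordan decomposition $g = g_s g_u$, a nontrivial $g_u = \exp(X)$ with $X$ nilpotent generates a one-parameter subgroup $\{\exp(tX)\}_{t \in \mathbb{R}}$ that lies in $G_g$ and is unbounded, contradicting compactness. With $g$ semisimple, a slice transverse to the conjugacy class of $g$ identifies a neighborhood of $g$ in $G$ with one of the form $G \times_{G_g} S$ for a transverse slice $S$, and the compact-type condition on $\mathfrak{g}_{g'}$ for $g'$ close to $g$ reduces to the corresponding open condition on the slice, which is the Lie-algebra statement \cite[Corollary 2.4]{W}.

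For (2) and (3), I would use the semisimplicity of elements in $\mathcal{D}$ to place any $g \in \mathcal{D}$ inside a Cartan subgroup $H$ of $G$, which must then be compact because $H \subset G_g$. Conjugating $H$ into $K$ identifies it with a torus in $K$, and since $H$ is a Cartan of $G$ one gets $\text{rk}\,G = \text{rk}\,K$. Conversely, if this equal-rank condition holds then $T$ is itself a compact Cartan subgroup of $G$, and a generic element of $T$ has centralizer $T$ and so is strongly stable. For (3), still under equal rank, any $g \in \mathcal{D}$ is $G$-conjugate to some $\exp(\xi)$ with $\xi \in \mathfrak{t}$, and the root-space decomposition of $\mathfrak{g}^{\mathbb{C}}$ with respect to $\mathfrak{t}^{\mathbb{C}}$ gives
\[
\mathfrak{g}_{\exp(\xi)}^{\mathbb{C}} = \mathfrak{t}^{\mathbb{C}} \oplus \bigoplus_{\alpha(\xi) \in 2\pi i \mathbb{Z}} \mathfrak{g}_\alpha.
\]
Compactness of the real form of this centralizer amounts exactly to excluding every noncompact root $\alpha$ from the sum, which is precisely the condition defining $\mathcal{F}$.

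The principal obstacle will be cleanly handling the openness in (1): one is really propagating compactness of the centralizer Lie algebra, not merely its dimension, and one must argue that the characterization of compactness via a Cartan decomposition varies continuously under small perturbations of $g$. The slice reduction, combined with \cite[Corollary 2.4]{W}, is what makes this precise; the other two parts then follow essentially by the same root-theoretic computations that appear in \cite[Propositions 2.6 and 2.8]{W}.
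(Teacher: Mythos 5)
Your argument is correct and matches the paper's intent exactly: the paper offers no proof of this proposition beyond the phrase ``by adapting \cite[Corollary 2.4, Propositions 2.6 and 2.8]{W}'', and your reduction --- semisimplicity of strongly stable elements via the Jordan decomposition, the slice argument for openness, placement of a strongly stable semisimple element in a compact Cartan subgroup for the equal-rank criterion, and the root-space computation of $\mathfrak{g}_{\exp(\xi)}$ relative to $\mathfrak{t}$ for part (3) --- is precisely the intended adaptation of those Lie-algebra statements to the group setting. One small correction of attribution: the reference \cite{W} relevant here is Weinstein's paper on Poisson geometry of discrete series orbits (the bibliography carries a duplicate key \texttt{W}), where strong stability is defined for Lie-algebra elements, not Wassermann's Connes--Kasparov note.
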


\begin{proposition}
	Let $\langle, \rangle_\mathfrak{g}$ be the normalized Killing form on $\mathfrak{g}$ such that its restriction to $\mathfrak{k}$ is the basic inner product of $\mathfrak{k}$, and $\eta_G=\frac{1}{12}\langle\theta^L, [\theta^L, \theta^L]\rangle_\mathfrak{g}$. Let $\mu_{G/K, k_0}: G/K\to G$ be the map
	\[gK\mapsto \text{Ad}_g k_0\]
	for some $k_0\in K$ and $\omega_{G/K, k_0}$ the 2-form on $G/K$ defined by 
	\[\omega_{G/K, k_0}(X_\xi, X_\zeta)=\frac{1}{2}(\langle\text{Ad}_{k_0}\xi, \zeta\rangle_\mathfrak{g}-\langle\xi, \text{Ad}_{k_0}\zeta\rangle_\mathfrak{g}).\]
	Here $X_\xi$ is the vector field on $G/K$ induced by the infinitesimal action by $\xi$ through left translation. Then $(G/K, \omega_{G/K, k_0}, \mu_{G/K, k_0})$ is a $q$-Hamiltonian $G$-manifold. 
	If we further assume that $k_0\in\mathcal{D}\cap K$, then $\mu_{G/K, k_0}$ is a proper map and its image, the conjugacy class containing $k_0$, is acted properly upon by $G$ through conjugation.
\end{proposition}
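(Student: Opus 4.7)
The plan is to verify the three defining axioms of a $q$-Hamiltonian $G$-manifold for the triple $(G/K,\omega_{G/K,k_0},\mu_{G/K,k_0})$ by directly adapting the Alekseev--Malkin--Meinrenken argument for the conjugacy class of $k_0$ in $K$ from \cite{AMM} to the noncompact setting, and then to deduce both properness assertions from compactness of the centralizer $Z_G(k_0)$ when $k_0\in\mathcal{D}\cap K$.

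First I would check that $\mu_{G/K,k_0}$ and $\omega_{G/K,k_0}$ descend to $G/K$ under the chosen conventions on $k_0$, and that $\mu_{G/K,k_0}$ is $G$-equivariant with respect to left multiplication on $G/K$ and conjugation on $G$, which is immediate from $\mu(g'gK)=\text{Ad}_{g'g}k_0=\text{Ad}_{g'}\mu(gK)$. The image of $\mu_{G/K,k_0}$ is then the $G$-conjugacy class of $k_0$, naturally identified with $G/Z_G(k_0)$. Next I would verify the moment map equation
\[
\iota_{X_\xi}\omega_{G/K,k_0}=\tfrac{1}{2}\mu_{G/K,k_0}^*\langle\theta^L+\theta^R,\xi\rangle_\mathfrak{g}
\]
by a direct computation using the explicit formula for $\omega$, Ad-invariance of $\langle\cdot,\cdot\rangle_\mathfrak{g}$, and the pullback identity $\mu^*\theta^L=\text{Ad}_g(\text{Ad}_{k_0^{-1}}-1)$ together with its companion for $\theta^R$, both obtained by differentiating $gK\mapsto gk_0g^{-1}$.

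The main step is the $3$-form identity $d\omega_{G/K,k_0}=-\mu_{G/K,k_0}^*\eta_G$. I would pull both sides back to $G$ along the quotient map $G\to G/K$, expand using the Maurer--Cartan equation $d\theta^L+\tfrac{1}{2}[\theta^L,\theta^L]=0$ together with the pullback formulas above, and match term by term; the argument is the noncompact analog of the AMM computation for compact conjugacy classes, the essential input being that $\langle\cdot,\cdot\rangle_\mathfrak{g}$ is Ad-invariant on all of $\mathfrak{g}$. The minimal degeneracy axiom $\ker\omega_{G/K,k_0}|_x=\{X_\xi(x):\text{Ad}_{\mu(x)}\xi=-\xi\}$ is checked pointwise: at $x=gK$, the vector $X_\xi(x)$ is in the kernel iff $(1-\text{Ad}_{\mu(x)})\xi$ pairs trivially under $\langle\cdot,\cdot\rangle_\mathfrak{g}$ with every $X_\zeta(x)$, equivalently iff $(\text{Ad}_{\mu(x)^{-1}}-\text{Ad}_{\mu(x)})\xi\in\text{Ad}_g\mathfrak{k}$, and a short manipulation using the splitting $\mathfrak{g}=\mathfrak{k}\oplus\mathfrak{p}$ collapses this to the claimed $(-1)$-eigenspace condition.

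For the properness claims when $k_0\in\mathcal{D}\cap K$: by definition of $\mathcal{D}$, the centralizer $Z_G(k_0)$ is compact, so the conjugacy class $\mathcal{C}\cong G/Z_G(k_0)$ is a proper $G$-space, every point stabilizer being a conjugate of $Z_G(k_0)$. Properness of $\mu_{G/K,k_0}$ itself then follows because each fiber is a $Z_G(k_0)$-orbit of a coset in $G/K$, hence the continuous image of a compact group and therefore compact; combined with local compactness of $G/K$ and closedness of $\mathcal{C}$ in $G$, this gives compactness of inverse images of compact subsets of $G$. The main obstacle will be the $3$-form identity $d\omega=-\mu^*\eta_G$, since it requires careful bookkeeping of Maurer--Cartan identities and of the Cartan splitting when descending between $G$ and $G/K$; the moment map equation, minimal degeneracy, and the properness assertions are essentially routine consequences of the AMM template and the definition of strongly stable elements.
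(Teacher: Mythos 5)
Your proposal is correct and follows essentially the same route as the paper, whose entire proof consists of the remark that the result is a straightforward adaptation of \cite[Proposition 3.1]{AMM} on conjugacy classes of $K$; your plan simply spells out that adaptation (the three $q$-Hamiltonian axioms via Ad-invariance of the normalized Killing form on all of $\mathfrak{g}$) and supplies the standard properness argument from compactness of $Z_G(k_0)$ for $k_0\in\mathcal{D}$, which the paper leaves implicit.
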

\begin{proof}
	This is just a straightforward adaptation of \cite[Proposition 3.1]{AMM} on conjugacy classes of $K$ as an example of $q$-Hamiltonian $K$-spaces. 
\end{proof}

Let $k_0\in K$ and $\mathfrak{p}=\mathfrak{k}^\perp$ with respect to $\langle, \rangle_\mathfrak{g}$ so that $\mathfrak{g}=\mathfrak{k}\oplus \mathfrak{p}$ is the Cartan decomposition of $\mathfrak{g}$. Then $\mathfrak{p}$ and $G/K$ are $K$-equivariantly diffeomorphic through the map 
\[\xi\mapsto \text{exp}(\xi)K.\] 
In this way, $\omega_{G/K, k_0}\in \Omega^2(G/K)$ corresponds to the 2-form $\omega_{\mathfrak{p}, k_0}\in \Omega^2(\mathfrak{p})$ defined by 
\[\omega_{\mathfrak{p}, k_0}(X, Y)=\frac{1}{2}(\langle\text{Ad}_{k_0}X, Y\rangle_\mathfrak{g}-\langle X, \text{Ad}_{k_0}Y\rangle_\mathfrak{g}).\]
The moment map $\mu_{G/K, k_0}$ corresponds to 
\begin{align*}
	\mu_{\mathfrak{p}, k_0}:\mathfrak{p}&\to G\\
	\xi&\mapsto \text{Ad}_{\text{exp}(\xi)}k_0.
\end{align*}
Thus $(\mathfrak{p}, \omega_{\mathfrak{p}, k_0}, \mu_{\mathfrak{p}, k_0})$ is a $q$-Hamiltonian $G$-space.

\begin{proposition}\label{inflation}
	Let $N$ be a $K$-manifold, and $M$ the \emph{induction of }$N$, i.e. the quotient
	\[M:=G\times_K N:=G\times N/((g, n)\sim (gk^{-1}, k\cdot n), k\in K)\]
	with the $G$-action $h\cdot[g, n]=[hg, n]$. Then 
	\begin{enumerate}
		\item\label{untwist} The map 
		\begin{align*}
			s: \mathfrak{p}\times N&\to M\\
			(\xi, n)&\mapsto [\text{exp}(\xi), n]
		\end{align*}
		is a $K$-equivariant diffeomorphism. 
		\item\label{tangentid} $T_{[e_G, n]}M$ can be identified with $T_nN\oplus\mathfrak{p}$. Under this identification, $\xi\in\mathfrak{p}$ corresponds to $(X_\xi)_{[e_G, n]}\in T_{[e_G, N]}M$. 
	\end{enumerate}
\end{proposition}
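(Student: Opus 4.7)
The plan is to reduce everything to the Cartan decomposition diffeomorphism $K\times\mathfrak{p}\xrightarrow{\sim} G$, $(k,\xi)\mapsto k\exp(\xi)$, which was already used in \S\ref{thetwists}.

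For part (\ref{untwist}), I would exhibit an explicit inverse to $s$. Given $[g,n]\in M$, write $g$ uniquely as $g=k_0\exp(\eta)$ via Cartan decomposition. Since $k_0\exp(\eta)=\exp(\mathrm{Ad}_{k_0}\eta)\,k_0$, we get
\[
[g,n]=[\exp(\mathrm{Ad}_{k_0}\eta)\,k_0,n]=[\exp(\mathrm{Ad}_{k_0}\eta),k_0\cdot n],
\]
so I define $s^{-1}([g,n])=(\mathrm{Ad}_{k_0}\eta,\,k_0\cdot n)$. The main thing to check is that this is well-defined under the equivalence $(g,n)\sim(gk^{-1},k\cdot n)$: if $g=k_0\exp(\eta)$, then $gk^{-1}=(k_0k^{-1})\exp(\mathrm{Ad}_k\eta)$, and a quick substitution gives the same image. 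Smoothness of $s$ and $s^{-1}$ is inherited from the Cartan diffeomorphism. For $K$-equivariance, the calculation
\[
k\cdot s(\xi,n)=[k\exp(\xi),n]=[\exp(\mathrm{Ad}_k\xi)\,k,n]=[\exp(\mathrm{Ad}_k\xi),k\cdot n]=s(\mathrm{Ad}_k\xi,\,k\cdot n)
\]
shows $s$ is equivariant with respect to the $K$-action $k\cdot(\xi,n)=(\mathrm{Ad}_k\xi,\,k\cdot n)$ on $\mathfrak{p}\times N$.

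For part (\ref{tangentid}), I would use the natural splitting at the distinguished point $[e_G,n]$ coming from the two canonical maps $\iota_N\colon N\hookrightarrow M$, $n'\mapsto[e_G,n']$, and $\sigma_n\colon\mathfrak{p}\to M$, $\xi\mapsto[\exp(\xi),n]=s(\xi,n)$. The differentials $(d\iota_N)_n\colon T_nN\to T_{[e_G,n]}M$ and $(d\sigma_n)_0\colon\mathfrak{p}\to T_{[e_G,n]}M$ together provide the direct sum decomposition since $s$ is a diffeomorphism by part (\ref{untwist}). The identification $\xi\mapsto(X_\xi)_{[e_G,n]}$ is then just the computation
\[
(d\sigma_n)_0(\xi)=\tfrac{d}{dt}\Big|_{t=0}[\exp(t\xi),n]=\tfrac{d}{dt}\Big|_{t=0}\exp(t\xi)\cdot[e_G,n]=(X_\xi)_{[e_G,n]},
\]
using that the $G$-action on $M$ is by left translation on the first factor.

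There is no real obstacle here; the statement is essentially bookkeeping around the Cartan decomposition. The only mildly subtle point is verifying that $s^{-1}$ is well-defined on the quotient $G\times_K N$, which comes down to the fact that conjugation by $K$ on $\exp(\mathfrak{p})$ corresponds to the adjoint action on $\mathfrak{p}$ and commutes properly with the Cartan splitting.
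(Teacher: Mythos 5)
Your proof is correct and follows essentially the same route as the paper: both exhibit the explicit inverse of $s$ via the Cartan decomposition (your $s^{-1}([g,n])=(\mathrm{Ad}_{k_0}\eta,\,k_0\cdot n)$ with $g=k_0\exp(\eta)$ is literally the paper's $t([g,n])=(\xi,k\cdot n)$ for $g=\exp(\xi)k$ after the change of factorisation), and deduce (2) from (1). Your verification of well-definedness on the quotient and the derivative computation for $(X_\xi)_{[e_G,n]}$ supply details the paper leaves implicit, but there is no difference in method.
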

\begin{proof}
	It is easy to check that, if for $g\in G$, its Cartan decomposition is $g=\text{exp}(\xi)k$ for $\xi\in\mathfrak{p}$, $k\in K$, then the map
	\begin{align*}
		t: M&\to \mathfrak{p}\times N\\
		[g, n]&\mapsto (\xi, k\cdot n)
	\end{align*}
	is smooth, $K$-equivariant and the inverse of $s$. This proves (\ref{untwist}). (\ref{tangentid}) follows from (\ref{untwist}). 
\end{proof}

\begin{proposition}\label{qHaminduc}
	\begin{enumerate}
		\item Let $(N, \omega_N, \mu_N)$ be a $q$-Hamiltonian $K$-manifold. On the induction $M$ of $N$, define the 2-form $\omega_M$ by requiring that it be $G$-invariant and 
	\[(\omega_M)_{[e_G, n]}(v+\xi, w+\zeta)=\omega_N(v, w)+\frac{1}{2}(\langle\text{Ad}_{\mu_N(n)}\xi, \zeta\rangle_\mathfrak{g}-\langle \xi, \text{Ad}_{\mu_N(n)}\zeta\rangle_\mathfrak{g})\]
	(Here we use the identification of tangent spaces as in Proposition \ref{inflation} (\ref{tangentid})). Also define 
	\begin{align*}
		\mu_M: M&\to G\\
		[g, n]&\mapsto \text{Ad}_g{\mu_N(n)}.
	\end{align*}
	Then $(M, \omega_M, \mu_M)$ is a $q$-Hamiltonian $G$-manifold. 
		\item If we further assume that $G$ and $K$ be of equal rank such that $\text{Im }\mu_N\subseteq\mathcal{D}\cap K$, then $G$ acts on $M$ properly.
	\end{enumerate}
\end{proposition}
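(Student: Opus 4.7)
I plan to adapt the induction construction for compact $q$-Hamiltonian spaces from \cite{AMM} to the present noncompact setting, exploiting the Cartan decomposition $G = K\cdot\exp(\mathfrak{p})$. The form $\omega_M$ is $G$-invariant by construction, $\mu_M$ is $G$-equivariant with respect to conjugation on $G$, and the Cartan $3$-form $\eta_G$ is bi-invariant on $G$, so each of the three $q$-Hamiltonian axioms---$d\omega_M = \mu_M^*\eta_G$, the moment map identity $\iota_{X_\xi}\omega_M = \tfrac{1}{2}\mu_M^*\langle\theta^L+\theta^R,\xi\rangle_\mathfrak{g}$, and the minimal degeneracy $\ker(\omega_M)_m = \{(X_\xi)_m : \xi \in \ker(\text{Ad}_{\mu_M(m)}+1)\}$---will only need to be verified at points of the form $[e_G,n]$. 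At any such point, the $K$-equivariant diffeomorphism $s:\mathfrak{p}\times N\to M$ of Proposition \ref{inflation}(\ref{untwist}) and the tangent splitting $T_{[e_G,n]}M \cong T_nN \oplus \mathfrak{p}$ of Proposition \ref{inflation}(\ref{tangentid}) will turn each axiom into a block computation.

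First I would verify the closure condition. At $[e_G,n]$ the $T_nN$-block of $\omega_M$ equals $\omega_N$ and so contributes $\mu_N^*\eta_G$ by the $q$-Hamiltonian hypothesis on $N$; the $\mathfrak{p}$-block mimics the conjugacy-class 2-form $\omega_{G/K,\mu_N(n)}$ and contributes $\mu_{G/K,\mu_N(n)}^*\eta_G$ via the identification $G/K\cong\mathfrak{p}$ and the preceding proposition. The mixed derivatives that appear when the base conjugacy class itself varies with $n$ will then be absorbed using the standard AMM multiplicativity identity
\[ m^*\eta_G \;=\; \mathrm{pr}_1^*\eta_G + \mathrm{pr}_2^*\eta_G + \tfrac{1}{2}\,d\langle\mathrm{pr}_1^*\theta^R,\mathrm{pr}_2^*\theta^L\rangle_\mathfrak{g}, \]
applied to the factorization $\mu_M([g,n]) = g\cdot\mu_N(n)\cdot g^{-1}$ of $\mu_M$ through group multiplication $G\times G\to G$. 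This bookkeeping will be the main technical obstacle.

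Next, for the moment map identity, I would decompose $\xi = \xi_\mathfrak{k}+\xi_\mathfrak{p}\in\mathfrak{g}$ and use the identities $[e^{t\xi_\mathfrak{k}},n] = [e_G, e^{t\xi_\mathfrak{k}}\cdot n]$ and $s(t\xi_\mathfrak{p},n) = [e^{t\xi_\mathfrak{p}},n]$ in $M$ to compute $(X_\xi)_{[e_G,n]} = (X_{\xi_\mathfrak{k}}^N)_n\oplus \xi_\mathfrak{p}$ under the splitting. Contracting with the explicit formula for $\omega_M$ and invoking the moment map conditions of $(N,\omega_N,\mu_N)$ and $(G/K,\omega_{G/K,\mu_N(n)},\mu_{G/K,\mu_N(n)})$ on the two blocks separately, the required identity will follow after expanding $\mu_M^*(\theta^L+\theta^R)$ by the chain rule. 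The minimal degeneracy axiom will reduce at $[e_G,n]$ to the direct sum of the kernel conditions for $\omega_N$ and $\omega_{G/K,\mu_N(n)}$, and so it will follow immediately from the two individual hypotheses.

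For part (2), the projection $\pi: M \to G/K$, $[g,n]\mapsto gK$, is a $G$-equivariant fiber bundle, and $G$ acts properly on the Riemannian symmetric space $G/K$ because $K$ is compact. Properness of the $G$-action on $M$ will then follow: any $g\in G$ moving a compact $L\subset M$ to meet a compact $L'\subset M$ must move the compact $\pi(L)$ to meet $\pi(L')$, and the set of such $g$ is relatively compact in $G$ by properness on the base. The equirank hypothesis together with $\text{Im}(\mu_N)\subseteq\mathcal{D}\cap K$ will ensure in addition that $\text{Im}(\mu_M)\subseteq\mathcal{D}$, so $G$ also acts properly on $\mu_M(M)$, which is the form of properness relevant for the quantization application of this appendix.
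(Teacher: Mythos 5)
Your proposal is correct and follows essentially the same route as the paper's proof: reduce each axiom to the point $[e_G,n]$ by $G$-invariance, use the splitting $T_{[e_G,n]}M\cong T_nN\oplus\mathfrak{p}$ from Proposition \ref{inflation}, and verify the closure, moment-map, and minimal-degeneracy conditions blockwise against the $q$-Hamiltonian structures on $N$ and on $\mathfrak{p}\cong G/K$; for part (2) the paper simply cites \cite[Lemma 6.12]{H}, but your direct argument via the $G$-equivariant projection $M\to G/K$ and properness of the $G$-action on $G/K$ is valid (and in fact shows properness without the equirank and $\mathcal{D}$ hypotheses, which as you note are only needed to control $\mathrm{Im}\,\mu_M$). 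One point in your favour: you explicitly flag the cross terms in the closure condition and propose the AMM multiplicativity identity for $\eta_G$ applied to $\mu_M([g,n])=g\mu_N(n)g^{-1}$ to absorb them, whereas the paper checks the identity only after restriction to the two blocks $T_nN$ and $\mathfrak{p}$ separately — so carrying out that bookkeeping would actually make your write-up more complete than the published argument (mind only the sign convention, as the paper states the axiom as $d\omega_M=-\mu_M^*\eta_G$).
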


\begin{remark}
	The construction of the $q$-Hamiltonian $G$-manifold $(M, \omega_M, \mu_M)$ is inspired by the similar construction of Hamiltonian structure on $M$ from that on $N$ in \cite[\S 12.2]{H}. 
\end{remark}
\begin{proof}
	It suffices to check that $(M, \omega_M, \mu_M)$ satisfies the three conditions in the definition of $q$-Hamiltonian spaces (\cite[Definition 2.2]{AMM}). 
	\begin{enumerate}
		\item $d\omega_M=-\mu_M^*\eta_G$. As both $\omega_M$ and $\eta_G$ are $G$-invariant, and $N$ as the submanifold of $M$ through the embedding
		\begin{align*}
			i: N&\to M\\
			n&\mapsto [e_G, n]
		\end{align*}
		is a slice of $M$, it suffices to check that the equation holds on $T_{[e_G, n]}M\cong T_nN\oplus \mathfrak{p}$ for $n\in N$. Note that 
		\begin{align*}
			i^*d\omega_M&=di^*\omega_M\\
						&=d\omega_N\\
						&=-\mu_N^*\eta_G\\
						&=-(\mu_M\circ i)^*\eta_G\\
						&=-i^*\mu_M^*\eta_G
		\end{align*}
		Thus the equation holds when restricted to $T_n N$. Next, $\mathfrak{p}\subseteq T_{[e_G, n]}M$ and $\omega_M$ restricted to it can be identified naturally with $(\mathfrak{p}, \omega_{\mathfrak{p}, \mu_N(n)}, \mu_{\mathfrak{p}, \mu_N(n)})$, and the equation restricted to $\mathfrak{p}$ is equivalent to, under this identification, $d\omega_{\mathfrak{p}, \mu_N(n)}=-\mu_{\mathfrak{p}, \mu_N(n)}^*\eta_G$, which is true as we have shown that $(\mathfrak{p}, \omega_{\mathfrak{p}, \mu_N(n)}, \mu_{\mathfrak{p}, \mu_N(n)})$ is a $q$-Hamiltonian $G$-manifold. 
		\item $\displaystyle\iota_{X_\xi}\omega_M=\frac{1}{2}\mu_M^*\langle\theta^L+\theta^R, \xi\rangle_\mathfrak{g}$. Again, by $G$-invariance of $\omega_M$, $X_\xi$ and $\displaystyle \frac{1}{2}\mu_M^*\langle\theta^L+\theta^R, \xi\rangle_\mathfrak{g}$, it suffices to show that the equation holds on $T_{[e_G, n]}M\cong T_nN\oplus \mathfrak{p}$. If $\alpha\in\mathfrak{g}$ and $\alpha=\xi+\eta$ for $\xi\in\mathfrak{p}$ and $\eta\in\mathfrak{k}$, then $X_\alpha=X_\xi+X_\eta$ by the Baker-Campbell-Hausdorff formula. Besides, under the identification $T_{[e_G, n]}M\cong T_nN\oplus \mathfrak{p}$, we have $(X_\xi)_{[e_G, n]}\in T_{[e_G, n]}M$ corresponds to $\xi\in\mathfrak{p}$ and $(X_\eta)_{[e_G, n]}\in T_n N$. We then have
		\begin{align*}
			(\iota_{X_\alpha}\omega_M)_{[e_G, n]}(w+\zeta)&=(\omega_M)_{[e_G, n]}(X_\eta+\xi, w+\zeta)\\
												&=(\omega_N)_n(X_\eta, w)+\frac{1}{2}(\langle\text{Ad}_{\mu_N(n)}\xi, \zeta\rangle_\mathfrak{g}-\langle\xi, \text{Ad}_{\mu_N(n)}\zeta\rangle_\mathfrak{g})\\
												&=\frac{1}{2}\mu_N^*(\theta^L+\theta^R, \eta)(w)+\frac{1}{2}(\theta^L+\theta^R, \xi)(\zeta)\ (\text{by \cite[Proposition 3.1]{AMM}})\\
												&=\frac{1}{2}\mu_M^*(\theta^L+\theta^R, \xi)(w+\zeta)\\
												&(\text{As }(\mu_{M*})_{[e_G, n]}(w+\zeta)=(\mu_{N*})_n(w)+\zeta\text{ because }\mu_M([e_G, n])=\mu_N(n))
		\end{align*}
		\item $\text{ker}(\omega_M)_{[g, n]}=\{X_\xi| \xi\in\text{ker}(\text{Ad}_{\mu_M([g, n])}+1)\}$. It suffices to prove the equation for $g=e_G$ because both sides of the equations are $G$-invariant: that the LHS is $G$-invariant is due to the $G$-invariance of $\omega_M$. For the $G$-invariance of the RHS, note that $g\cdot(X_\xi)_{[e_G, n]}=(X_{\text{Ad}_g\xi})_{[g, n]}$ and $\xi\in \text{ker}(\text{Ad}_{\mu_M([e_G, n])}+1)$ if and only if $\text{Ad}_g\xi\in\text{ker}(\text{Ad}_{\mu_M([g, n])}+1)$ by the $G$-equivariance of $\mu_M$. Now for $v\in T_nN$ and $\xi\in\mathfrak{p}$, $v+\xi\in\text{ker}(\omega_M)_{[e_G, n]}$ if and only if 
		\begin{enumerate}
			\item $v\in\text{ker}(\omega_N)_n$, i.e. $v=(X_\eta)_n$ for some $\eta\in\mathfrak{k}$ such that $\eta\in\text{ker}(\text{Ad}_{\mu_N(n)}+1)_\mathfrak{k}$, and 
			\item $X_\xi\in\text{ker}(\omega_{\mathfrak{p}, \mu_N(n)})$, i.e. $\xi\in\text{ker}(\text{Ad}_{\mu_N(n)}+1)_\mathfrak{g}$ (or equivalently $\xi\in\text{ker}(\text{Ad}_{\mu_N(n)}+1)_\mathfrak{p}$ as $\xi\in\mathfrak{p}$). 
		\end{enumerate}
		Moreover, $v+X_\xi=X_\eta+X_\xi=X_{\eta+\xi}$ and $\eta+\xi\in\text{ker}(\text{Ad}_{\mu_N(n)}+1)_\mathfrak{g}$. This completes the proof of the equation. 
	\end{enumerate}
	The second part of the proposition follows by adapting the proof of \cite[Lemma 6.12]{H}.
\end{proof}

One can also construct a $q$-Hamiltonian $K$-manifold from a $q$-Hamiltonian $G$-manifold by means of taking the so-called $q$-Hamiltonian \emph{cross-section}, defined to be $N:=\mu_M^{-1}(K)$. The following proposition, which asserts that induction and taking cross-section are reverse to each other, is straightforward (cf. \cite[\S 12.4]{H}).

\begin{proposition}\label{cross-section}
	Let $G$ and $K$ have the same rank, $(M, \omega_M, \mu_M)$ be a $q$-Hamiltonian $G$-manifold with proper $G$-action and $\text{Im}\mu_M\in \mathcal{D}$. Suppose the map $\mu_M: M\to G$ composed with the quotient map $G\to G/K$ has $e_GK$ as a regular value. Define the $q$-Hamiltonian cross-section $N:=\mu_M^{-1}(K)$ and let $i: N\to M$ be the embedding. Then $(N, i^*\omega_M, \mu_M\circ i)$ is a $q$-Hamiltonian $K$-manifold. 
\end{proposition}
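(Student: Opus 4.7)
The plan is to verify directly the three defining axioms of a $q$-Hamiltonian $K$-manifold for the triple $(N, i^*\omega_M, \mu_M\circ i)$. First, I would check that $N$ is a smooth $K$-submanifold: the regular-value hypothesis on $M \xrightarrow{\mu_M} G \to G/K$ at $e_GK$ gives that $N = \bar{\mu}_M^{-1}(e_GK)$ is a closed submanifold of codimension $\dim\mathfrak{p}$, and $G$-equivariance of $\mu_M$ together with the fact that conjugation by $K$ preserves $K\subseteq G$ shows $N$ is $K$-invariant.

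Next I would dispatch axioms (1) and (2) essentially by pullback. Because $\langle\cdot,\cdot\rangle_\mathfrak{g}|_{\mathfrak{k}\times\mathfrak{k}}$ is the basic inner product of $\mathfrak{k}$ and $\theta_G^{L}|_K = \theta_K^L$, the restriction $\eta_G|_K$ equals $\eta_K$. Therefore $d(i^*\omega_M) = i^*(-\mu_M^*\eta_G) = -(\mu_M\circ i)^*\eta_K$, giving axiom (1). For axiom (2), note that for $\xi\in\mathfrak{k}$ the fundamental vector field $X_\xi$ on $M$ is tangent to $N$ by $K$-invariance; pulling back $\iota_{X_\xi}\omega_M = \tfrac12 \mu_M^*\langle\theta^L+\theta^R,\xi\rangle_\mathfrak{g}$ and using again that on $K$-valued tangent vectors $\langle\cdot,\xi\rangle_\mathfrak{g}$ coincides with $\langle\cdot,\xi\rangle_\mathfrak{k}$ yields the desired identity on $N$.

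The main obstacle is axiom (3), the minimal degeneracy condition. The easy inclusion is immediate: if $k_0:=\mu_M(n)$ and $\xi\in\ker(\mathrm{Ad}_{k_0}+1)_\mathfrak{k}$, then $(X_\xi)_n\in T_nN$ (since $\xi\in\mathfrak{k}$) and $(X_\xi)_n\in\ker(\omega_M)_n$ by axiom (3) for $M$, hence $(X_\xi)_n\in\ker(i^*\omega_M)_n$. For the reverse inclusion, let $v\in\ker(i^*\omega_M)_n$; I would first show that $(\omega_M)_n(v,u)=0$ for every $u\in T_nM$, and then invoke axiom (3) for $M$. To extend the vanishing beyond $T_nN$, I will use axiom (2) for $M$ applied to vector fields $X_\xi$ with $\xi\in\mathfrak{p}$: since $v\in T_nN$ satisfies $d\mu_M(v)\in\mathfrak{k}\cdot k_0$ (under right trivialization), the pairing $\langle\theta^L+\theta^R,\xi\rangle_\mathfrak{g}$ evaluated on $d\mu_M(v)$ pairs an element of $\mathfrak{k}$ with $\xi\in\mathfrak{p}$, giving zero by orthogonality of the Cartan decomposition. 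Hence $(\omega_M)_n(v,(X_\xi)_n) = 0$ for all $\xi\in\mathfrak{p}$.

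To conclude that $v$ annihilates the full tangent space $T_nM$, I need to know that the map $\xi\mapsto (X_\xi)_n$ from $\mathfrak{p}$ to $T_nM/T_nN$ is surjective. Here is exactly where the hypothesis $\mathrm{Im}\,\mu_M\subseteq\mathcal{D}$ enters: for $k_0\in\mathcal{D}\cap K$ the centralizer $Z_G(k_0)$ is compact and contains $Z_K(k_0)$, forcing $Z_G(k_0)=Z_K(k_0)$ and hence $\ker(\mathrm{Ad}_{k_0}-1)_\mathfrak{g}\subseteq\mathfrak{k}$, so that $(1-\mathrm{Ad}_{k_0})|_\mathfrak{p}$ is invertible. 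A short computation shows that under the regular-value identification $T_nM/T_nN\cong \mathfrak{p}$ via $d\bar\mu_M$, the class of $(X_\xi)_n$ is exactly $(1-\mathrm{Ad}_{k_0})\xi$, so this map is an isomorphism. Combining with step 5a, $(\omega_M)_n(v,\cdot)=0$ on all of $T_nM$, so by axiom (3) for $M$ one has $v=(X_\eta)_n$ with $\eta\in\ker(\mathrm{Ad}_{k_0}+1)_\mathfrak{g}$; decomposing $\eta=\eta_\mathfrak{k}+\eta_\mathfrak{p}$ and noting $(X_\eta)_n\in T_nN$ forces $\eta_\mathfrak{p}\in\ker(\mathrm{Ad}_{k_0}-1)_\mathfrak{p}=0$, so $\eta\in\mathfrak{k}$, completing axiom (3). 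The strongly-stable hypothesis is thus the single nontrivial ingredient, used precisely to secure the surjectivity in step 5b; all other verifications are structural pullback arguments.
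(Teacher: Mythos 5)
Your proof is correct. Note that the paper itself gives no argument for this proposition: it is declared ``straightforward'' with a pointer to \cite[\S 12.4]{H}, so there is nothing to compare against except the intended outline, which your direct verification of the three axioms of \cite[Definition 2.2]{AMM} surely matches. The substantive content is exactly where you locate it: axioms (1) and (2) are pure pullback (using that $\langle\cdot,\cdot\rangle_\mathfrak{g}|_{\mathfrak{k}\times\mathfrak{k}}$ is the basic inner product, so $\eta_G|_K=\eta_K$), while the minimal degeneracy condition needs the decomposition $T_nM=T_nN\oplus\{(X_\xi)_n:\xi\in\mathfrak{p}\}$, the orthogonality $\mathfrak{k}\perp\mathfrak{p}$ to kill the pairings $(\omega_M)_n(v,(X_\xi)_n)$ for $\xi\in\mathfrak{p}$, and the strong stability of $k_0=\mu_M(n)$ to make $(1-\mathrm{Ad}_{k_0})|_{\mathfrak{p}}$ invertible. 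One small imprecision: from ``$Z_G(k_0)$ is compact and contains $Z_K(k_0)$'' it does not formally follow that the two groups are equal; what you actually need, and what does follow from compactness, is the Lie algebra statement $\ker(\mathrm{Ad}_{k_0}-1)_\mathfrak{g}\subseteq\mathfrak{k}$ --- since $\mathrm{Ad}_{k_0}$ preserves the Cartan decomposition, a nonzero $\xi\in\ker(\mathrm{Ad}_{k_0}-1)\cap\mathfrak{p}$ would generate a closed noncompact one-parameter subgroup $\exp(\mathbb{R}\xi)$ inside $Z_G(k_0)$. With that phrasing repaired, the argument is complete.
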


\bigskip

Let $M$ be a  $q$-Hamiltonian proper, cocompact $G$-manifold. By Abel's global slice theorem \cite{Abels}, 
  $M$ is $G$-equivariantly diffeomorphic to $G\times_K N$, therefore there is a projection map $b: M \to G/K$. 
  Since there is a canonical twisted $G$-${\text{spin}^c}$ structure on $M$, there is also a morphism $f: M \to (G, \tau^G_G)$ which is the $G$-valued moment
  map (denoted earlier by $\mu_M$), 
  
   \begin{definition}
The quantization of $M$, 
  $\cQ(M)$, is defined to be the equivalence class of the correspondence,
   \begin{equation}
\xymatrix{& \ar[ddl]_b (M , \xi)\ar[ddr]^{f} &\\
&&\\
G/K && (G, \tau^G_G)}
 \end{equation}
where $\xi= G\times_K TN$. That is, $\cQ(M) \in KK^G(G/K, (G, \tau_G^G))$.
\end{definition}

Define ${\mathcal D}_\mu = G\times_K {\mathcal C}_\mu$, on which $G$ acts by left translation on the first factor and trivially on the second factor. 
Here $\mu\in\Lambda^*_{k-\textsf{h}^\vee}(K)$ is a level $k-\textsf{h}^\vee$ weight and $\mathcal{C}_\mu$ the conjugacy class in $K$ containing $\text{exp}\left(\frac{B^\sharp(\mu)}{k-\textsf{h}^\vee}\right)$.
  Note that ${\mathcal D}_\mu$ has a proper $G$-action and is also $G$-cocompact. 
  
  In fact, we show in Proposition \ref{qHaminduc}  that  ${\mathcal D}_\mu$  is a 
 $ q$-Hamiltonian $G$-space with these properties.
Then there is a natural projection map 
$b: {\mathcal D}_\mu \longrightarrow G/K$. 
There is also a natural twisted $G$-${\text{spin}^c}$ map $f: {\mathcal D}_\mu \longrightarrow G$ defined by,  
$[g, c] \longrightarrow gcg^{-1}$, which is the $G$-valued moment
  map (denoted earlier by $\mu_{{\mathcal D}_\mu}$), . 

 \begin{equation}\label{Kgeneratorcorrespondence}
\xymatrix{& \ar[ddl]_b ({\mathcal D}_\mu , \xi_\mu)\ar[ddr]^{f} &\\
&&\\
G/K && (G, \tau^G_G)}
 \end{equation}
where $\xi_\mu = G\times_K T{\mathcal C}_\mu$, are correspondences. 

\begin{theorem}
	The correspondences (\ref{generatorcorrespondence}) for $\mu\in\Lambda_{k-\textsf{h}^\vee}(K)$ are all the generators of 
	$KK^G(G/K, (G, \tau_G^G)),$ that is,
	 $$\cQ({\mathcal D}_\mu ) \in KK^G(G/K, (G, \tau_G^G)),$$
	 forms a complete set of generators.
	\end{theorem}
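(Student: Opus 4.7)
The plan is to reduce the theorem to the generation result already established in Section \ref{sect:generators}, and then identify the quantization class $\cQ(\mathcal{D}_\mu)$ with the generating correspondence constructed there. First I would verify that each $\mathcal{D}_\mu = G\times_K \mathcal{C}_\mu$ is a $q$-Hamiltonian $G$-space for which $\cQ(\mathcal{D}_\mu)$ is well defined: the conjugacy class $\mathcal{C}_\mu \subset K$ is a $q$-Hamiltonian $K$-space by \cite[Prop.~3.1]{AMM}, and since $\mathcal{C}_\mu$ is compact (hence in particular $\text{Im}(\iota_\mu) \subseteq \mathcal{D}\cap K$ in the equirank case), Proposition \ref{qHaminduc} applies to produce the $q$-Hamiltonian $G$-structure $(\omega_{\mathcal{D}_\mu}, \mu_{\mathcal{D}_\mu})$ on $\mathcal{D}_\mu$ together with a proper $G$-action. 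Cocompactness of the $G$-action follows from the compactness of $\mathcal{C}_\mu$ and the identification $G \backslash \mathcal{D}_\mu \cong K\backslash \mathcal{C}_\mu$.

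Next I would identify the correspondence underlying $\cQ(\mathcal{D}_\mu)$ with the correspondence (\ref{generatorcorrespondence}) in Section \ref{sect:generators}. By Abel's global slice theorem, $\mathcal{D}_\mu = G\times_K \mathcal{C}_\mu$ is already in the induced form to which the definition of $\cQ$ applies, with slice $N = \mathcal{C}_\mu$, projection $b:\mathcal{D}_\mu \to G/K$, $G$-valued moment map $f([g,c]) = gcg^{-1}$, and Clifford module bundle $\xi_\mu = G\times_K T\mathcal{C}_\mu$. Comparing this to the correspondence (\ref{generatorcorrespondence}), the two agree on the nose, so $\cQ(\mathcal{D}_\mu)$ is precisely the class in $KK^G(G/K, (G, \tau_G^G))$ represented by (\ref{generatorcorrespondence}).

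With these identifications in place, the theorem reduces to the proposition in Section \ref{sect:generators} asserting that the classes of (\ref{generatorcorrespondence}), as $\mu$ ranges over $\Lambda_{k-\textsf{h}^\vee}^*(K)$, generate $KK^G_{\text{geom}}(G/K, (G, \tau_G^G))$, combined with the isomorphism of the geometric and analytic models established in Section \ref{sect:correspondences}. Under the chain of isomorphisms used in the proof of Theorem \ref{mainthm}(\ref{ringstructure}), the generators $[\mathcal{C}_\mu, T\mathcal{C}_\mu, \iota_\mu]$ of $K^K_\bullet(K, \tau_K^K)$ (a complete set by \cite{FHT}, \cite{M1}) correspond to $\cQ(\mathcal{D}_\mu)$, so completeness transfers. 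The main (and essentially only) nontrivial content is the compatibility of the quantization map $\cQ$ with the isomorphism $K^K_\bullet(K, \tau_K^K) \cong KK^G_\bullet(G/K, (G, \tau_G^G))$, which is exactly what the computation in the proof of the proposition in Section \ref{sect:generators} carries out by tracing the Thom, Poincar\'e duality, and restriction isomorphisms through the correspondence picture. Assembling these three steps yields the statement.
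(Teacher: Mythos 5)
Your proposal is correct and follows the paper's (implicit) route: the theorem is proved by observing that $\cQ(\mathcal{D}_\mu)$ is, on the nose, the correspondence (\ref{generatorcorrespondence}) of \S\ref{sect:generators}, and then invoking the proposition there, which traces the Meinrenken--FHT generators $(\mathcal{C}_\mu, T\mathcal{C}_\mu, \iota_\mu)$ of $K^K_\bullet(K,\tau_K^K)$ through the chain of isomorphisms in the proof of Theorem \ref{mainthm}(\ref{ringstructure}). One small caveat: compactness of $\mathcal{C}_\mu$ does not imply $\mathcal{C}_\mu\subseteq\mathcal{D}\cap K$ (e.g.\ $\mathcal{C}_0=\{e\}$ has centralizer all of $G$), but that hypothesis is not actually needed to make $\cQ(\mathcal{D}_\mu)$ well defined, since properness and cocompactness of the $G$-action on $G\times_K\mathcal{C}_\mu$ already follow from the equivariant fibration over $G/K$ with compact fibre $\mathcal{C}_\mu$.
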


 Recall that $\cI_K$ denotes the Verlinde ideal for $(K, \tau^K_K)$, 
 and that  $$K_*^K(K, \tau_K^K) \cong R(K)/\cI_K.$$ Recall also that {$\cI_G =  \text{D-Ind} (\cI_K)$} is defined to be the Verlinde ideal for 
$G$, in which case $$KK^G_\bullet(G/K, (G, \tau_G^G)) \cong K_*(C^*_rG)/\cI_G,$$
showing that $KK^G_\bullet(G/K, (G, \tau_G^G))$ is a {\em Verlinde ring}.

Now $ K_0(C^*_rG)$  is a ring by Dirac induction, so this gives an alternate description of the ring structure of $KK^G_\bullet(G/K, (G, \tau_G^G))=K_*(C^*_rG)/\cI_G$ which 
has the following properties.

\begin{enumerate}
\item $K_*(C^*_rG)/\cI_G$ is a unital ring with involution.
\item $K_*(C^*_rG)/\cI_G$ has a finite $\bbZ$-basis, $K_*(C^*_rG)/\cI_G= \bbZ[Q]$ where $Q$ are the level $(k-h^\vee)$-weights in the compact case, and in the non-compact case 
they can be viewed as those Harishchandra parameters that are in the image of the  level $(k-h^\vee)$-weights under the Dirac induction isomorphism (see also Example A.6).
They are generated by a certain subset of discrete series of $M$ where $M$ appears in the decomposition $P = MAN$ (see Definition A.1).
\item $K_*(C^*_rG)/\cI_G$ has a trace map, $ \tr \colon K_*(C^*_rG)/\cI_G \longrightarrow \bbZ $, defined as the coefficient of the unit in  $K_*(C^*_rG)/\cI_G$.
\end{enumerate}

Therefore  $K_*(C^*_rG)/\cI_G$ is a finite dimensional {\em Frobenius ring}, hence it is associated to a TQFT.

\end{appendix}

\end{document}